\newtheorem{lemma}{Lemma}[section]
\newtheorem{theorem}{Theorem}[section]
\newtheorem{definition}{Definition}[section]
\newtheorem{remark}{Remark}
\def \N{\mathcal{N}}
\begin{document}
\title{ On the Topological Pressure of Random Bundle Transformations in Sub-additive Case }
\author{ Yun Zhao, \,\,\,Yongluo Cao\\
\small\it Department of mathematics, Suzhou University\\
\small\it Suzhou 215006, Jiangsu, P.R.China,
  (ylcao@suda.edu.cn) }
\date{}
 \footnotetext{Partially supported by  NSFC(10571130), NCET, and  National Basic Research Program of China (973 Program)(2007CB814800).}
\footnotetext{2000 {\it Mathematics Subject classification}:
Primary 37D35; Secondary 34D20.} \maketitle

\begin{center}
\begin{minipage}{120mm}
{\small {\bf Abstract.}  In this paper, we define  the topological
pressure for sub-additive potential via separated sets in  random
dynamical systems and  give a proof of the relativized
variational principle for the topological pressure.}
\end{minipage}
\end{center}

\vskip0.5cm

{\small{\bf Key words and phrases} \ variational principle,
topological pressure, entropy.}\vskip0.5cm

\section{Introduction.}
\noindent The setup consists of a probability space $(\Omega,
\mathcal{W},\mathbb{P})$, together with a $\mathbb{P}-$preserving
transformation $\vartheta$, of a compact metric space $X$ together
with the distance function $d$ and the Borel $\sigma-$algebra
${\mathcal{B}}_{X}$, and of a measurable set $ \mathcal E \subset
\Omega\times X$ and such that all the fibers ( sometimes called
$\omega$-sections ) $\mathcal E_{\omega}=\{ x\in X \mid
(\omega,x)\in \mathcal E\}$ are compact. We assume $\mathcal{W}$
is complete, countably generated, and separates points, and so
$(\Omega, \mathcal {W},\mathbb{P})$ is a Lebesgue space. A
continuous bundle RDS over $(\Omega,
\mathcal{W},\mathbb{P},\vartheta)$ is generated by mappings
$T_{\omega}: \mathcal E_{\omega}\rightarrow \mathcal
E_{\vartheta\omega}$ with iterates
$T_{\omega}^{n}={T_{\vartheta^{n-1}\omega } \cdots
T_{\vartheta\omega} T_{\omega}},n\geq 1$, so that the map
$(\omega,x)\mapsto T_{\omega}x$ is measurable and the map
$x\mapsto T_{\omega}x$ is continuous for $\mathbb{P}$- almost all
$\omega$, here and in what follows we think of $\mathcal
E_{\omega}$ being equipped with the trace topology, i.e. an open
set $A\subset \mathcal E_{\omega}$ is of the form $A=B\cap
\mathcal E_{\omega}$ with some open set $B\subset X$. The map
$$\Theta : \mathcal E\rightarrow \mathcal E, \,\,  \Theta(\omega,x)=(\vartheta\omega,T_{\omega}x)$$
  is called  the
skew product transformation.

Let $L_{\mathcal E}^{1}(\Omega,C(X))$ denote the collection of all
integrable random continuous functions on fibers, i.e. a
measurable $f: \mathcal E\rightarrow \mathbf{R}$ is a member of
$L_{\mathcal E}^{1}(\Omega,C(X))$ if $f(\omega, \cdot): \mathcal
E_{\omega} \rightarrow \mathbf{R}$ is continuous and $\| f
\|:=\int_{\Omega} |f(\omega)|_{\infty} \mathrm{d}
\mathbb{P}(\omega)<\infty$, where $|f(\omega)
|_{\infty}=\sup_{x\in \mathcal E_{\omega}} |f(\omega,x)|$. If we
identify $f$ and $g$ provided $\| f-g \|=0$, then $L_{\mathcal
E}^{1}(\Omega,C(X))$ becomes a Banach space with the norm $\|\cdot
\|$.

The family  $\mathcal{F}$ = $\{ f_{n} \}$$_{n=1}^{\infty}$ of
integrable random continuous functions on $\mathcal E$ is called
sub-additive if for $\mathbb{P}$-almost all  $\omega$,
 $$f_{n+m}(\omega,x)\leq
f_{n}(\omega,x)+f_{m}(\Theta^{n}(\omega,x)) \,\,\, \mbox{for all}
\,\, x \in \mathcal E_{\omega} .$$

In the special case in which the \( \vartheta \)-invariant measure
\( \mathbb{P}\) is a Dirac-\( \delta \) measure supported on a
single fixed point \( \{p\} \), it reduces to the case in which \(
T: X \to X \) is a standard deterministic dynamical system.

In deterministic dynamical systems $T: X \to X$, the topological
pressure for additive potential was first introduced by Ruelle
\cite{ruelle} for  expansive maps acting on compact metric spaces.
In the same paper he formulated a variational principle for the
topological pressure. Later Walters \cite{walter} generalized
these results to general continuous maps on compact metric spaces.
The theory about the topological pressure, variational principle
and equilibrium states plays a fundamental role in statistical
mechanics, ergodic theory and dynamical systems. The fact that the
topological pressure is a characteristic of dimension type  was
first noticed by Bowen \cite{bowen}. Since then, it has become the
main tool in studying dimension of invariant sets and measure for
dynamical systems and the dimension of cantor-like sets in
dimension theory.

In \cite{caofh}, authors generalize Ruelle and Walters's result to
sub-multiplicative potentials in general compact dynamical
systems. They define the sub-multiplicative topological pressure
 and give  a variational principle for the sub-multiplicative topological
 pressure. Then in \cite{cao}, author uses the variational principle for the sub-multiplicative topological
 pressure to give an upper bound estimate of Hausdorff dimension for
 nonconformal repeller,  which generalizes the results by Falconer in
 \cite{falconer}, Barreira in \cite{barreira1, barreira}, and Zhang in
 \cite{zhang}.

 We point out  that  Falconer  had some earlier contributions in the study of
thermodynamic formalism for sub-additive potentials. In
\cite{Fal88}, Falconer  considered the thermodynamic formalism for
sub-additive potentials on mixing repellers. He proved the
variational principle about the topological pressure  under some
Lipschitz conditions and bounded distortion assumptions on the
sub-additive potentials. More precisely, he assumed that there
exist constants $M, a,b>0$ such that
$$
\frac{1}{n}\left|\log f_n(x)\right|\leq M,\quad
\frac{1}{n}\left|\log f_n(x)-\log f_n(y)\right|\leq a|x-y|, \quad
\forall x,y\in X, n\in \N
$$
and $|\log f_n(x)-\log f_n(y)|\leq b$ whenever  $x,y$ belong to
the same $n$-cylinder of the mixing repeller $X$.

 In deterministic case, the thermodynamic formalism based on
 the statistical mechanics notions of pressure and equilibrium
 states plays an important role in the study of chaotic properties
 of random transformations. The first version of the relativized variational
 principle
 appeared  in \cite{led-walters} and later it was extended in
 \cite{bog} to random transformations for special potential
 function. In \cite{kifer}, Kifer extended the variational 
 principle of topological pressure 
 for general integrable random continuous function.

The aim of this paper is to introduce topological pressure of
random bundle transformations for sub-additive potentials, and
show a relativized variational principle. We can see it as an
extension of results in \cite{caofh}, \cite{kifer}. The paper is
organized in the following manner: in section 2 we introduce the
definitions. In section 3 we will provide some useful lemmas. In
section 4 we will state and prove the main theorem: the
relativized variational principle. In section 5 we will apply
topological pressure of random bundle transformations for
sub-additive potentials to obtain the Hausdorff dimension of
asymptotically conformal repeller.

\section{ Topological pressure and entropy of bundle RDS}

In this section, we give the definitions of entropy and the
topological pressure for sub-additive potential.

Denote by ${\mathcal{P}_{\mathbb{P}}} (\Omega\times X)$ the space
of probability measures on $\Omega\times X$ having the marginal
$\mathbb{P}$ on $\Omega$ and set ${\mathcal{P}_{\mathbb{P}}}
(\mathcal E)=\{ \mu\in {\mathcal{P}_{\mathbb{P}}} (\Omega\times
X):\mu(\mathcal E)=1 \}$. Any $\mu\in {\mathcal{P}_{\mathbb{P}}}
(\mathcal E)$ on $\mathcal E$ disintegrates ${\mathrm{d}}
\mu(\omega,x)= {\mathrm{d}}
\mu_{\omega}(x){\mathrm{d}}\mathbb{P}(\omega)$, where $\mu_\omega$
are regular conditional probabilities with respect to the
$\sigma-$algebra ${\mathcal{W}}_{\mathcal E}$ formed by all sets
$(A \times X)\cap \mathcal E$ with $A\in \mathcal{W}$. This means
that $\mu_{\omega}$ is a probability measure on $\mathcal
E_{\omega}$ for $\mathbb{P}-a.a.\omega$ and for any measurable set
$R\subset \mathcal E$, $\mathbb{P}-$a.s.
$\mu_{\omega}(R_\omega)=\mu(R|{\mathcal{W}}_{\mathcal E})$, where
$R_\omega=\{ x: (\omega,x)\in R\}$, and so $\mu(R)=\int
\mu_{\omega}(R_\omega)\mathrm{d}\mathbb{P}(\omega)$. Now let
${\mathcal{R}}=\{ R_{i} \}$ be a finite or countable partition of
$\mathcal E$ into measurable sets. Then ${\mathcal{R}}(\omega)=\{
R_{i}(\omega)\}$, $R_i(\omega)=\{ x\in \mathcal E_\omega :
(\omega,x)\in R_i\}$ is a partition of $\mathcal E_\omega$. The
conditional entropy of $\mathcal{R}$ given the $\sigma-$algebra
${\mathcal{W}}_\mathcal E$ is defined by
\begin{eqnarray} H_{\mu}({\mathcal{R}}\mid
{\mathcal{W}}_\mathcal E)&=&- \int \sum\limits_i \mu(R_i\mid
{\mathcal{W}}_\mathcal E) \log \mu(R_i\mid {\mathcal{W}}_\mathcal
E) \mathrm{d}
\mathbb{P}\\
&=&\int H_{\mu_{\omega}}(\mathcal{R}(\omega))\mathrm{d}\mathbb{P}
\end{eqnarray}
where $H_{\mu_{\omega}}(\mathcal{A})$ denotes the usual entropy of
a partition $\mathcal{A}$. Let
${\mathcal{M}}_{\mathbb{P}}^{1}(\mathcal E,T)$ denote the set of
$\Theta-$invariant measures
$\mu\in{\mathcal{P}}_{\mathbb{P}}(\mathcal E)$. The entropy
$h_{\mu}^{(r)}(T)$ of the RDS $T$ with respect to $\mu$ is defined
by the formula \begin{eqnarray}
h_{\mu}^{(r)}(T)=\sup\limits_{\mathcal{Q}}
h_{\mu}^{(r)}(T,\mathcal{Q}) \end{eqnarray}
 where
$h_{\mu}^{(r)}(T,\mathcal{Q})$=$\lim\limits_{n\rightarrow \infty}
\frac{1}{n} H_{\mu}(\bigvee\limits_{i=0}^{n-1}(\Theta^{i})^{-1}
{\mathcal{Q}}\mid {\mathcal{W}}_\mathcal E)$ and the supremum is
taken over all finite or countable measurable partitions
${\mathcal{Q}}=\{ Q_i \}$ of $\mathcal E$ with
$H_{\mu}({\mathcal{Q}}\mid {\mathcal{W}}_\mathcal E)<\infty$.

Observe that if ${\mathcal{Q}}=\{ Q_i \}$ is a partition of
$\mathcal E$, then
 $\mathcal{R}=$$\bigvee\limits_{i=0}^{n-1}(\Theta^{i})^{-1}\mathcal{Q}$
 is a partition of $\mathcal E$ consisting of sets $\{ R_j \}$ such that
 the corresponding partition ${\mathcal{R}}(\omega)=\{ R_{j}(\omega)
 \}$, $R_{j}(\omega)=\{ x: (\omega,x)\in R_{j} \}$ of $\mathcal E_\omega$ has
 the form ${\mathcal{R}}(\omega)=\bigvee\limits_{i=0}^{n-1}(T_{\omega}^{i})^{-1}
 {\mathcal{Q}}(\vartheta^{i}\omega)$, where ${\mathcal{Q}}(\omega)=\{ Q_{i}(\omega)
 \}$, $Q_i(\omega)=\{ x\in \mathcal E_{\omega}: (\omega,x)\in Q_i \}$
 is a partition of  $\mathcal E_\omega$. So
 \begin{eqnarray} h_{\mu}^{(r)}(T,{\mathcal{Q}})=\lim\limits_{n\rightarrow
 \infty} \frac{1}{n} \int H_{\mu_{\omega}} (\bigvee\limits_{i=0}^{n-1}(T_{\omega}^{i})^{-1}{\mathcal{Q}}(\vartheta^{i}\omega))
 \mathrm{d} \mathbb{P}(\omega)
 \end{eqnarray} In \cite{bog} and \cite{ki}, the authors say that the
 resulting entropy remains the same if we take the supremum in (2.3)
 only over partitions $\mathcal{Q}$  of $\mathcal E$ into sets $Q_i$ of the
 form $Q_i=(\Omega\times A_i)\cap \mathcal E$, where ${\mathcal{A}}=
 \{ A_i \}$ is a partition of $
 X$ into measurable sets, so that $Q_i(\omega)=A_i\cap \mathcal E_\omega$. If $\vartheta$ is invertible,
 then $\mu\in {\mathcal{P}}_{\mathbb{P}}(\mathcal E)$ is $\Theta-$invariant if and only if
 the disintegrations $\mu_{\omega}$ of $\mu$ satisfy
 $T_{\omega}\mu_{\omega}=\mu_{\vartheta\omega}$ $\mathbb{P}-a.s.$  In this case,
 if, in addition, $\mathbb{P}$ is ergodic, then the formula(2.4)
 remains true $\mathbb{P}-$a.s. without integrating against
 $\mathbb{P}$.

For each $n\in \mathbf{N}$ and a positive random variable 
$\epsilon=\epsilon(\omega)$, we define a family of metrics
$d_{\epsilon,n}^{\omega}$ on $\mathcal E_{\omega}$ by the 
formula
\[ d_{\epsilon,n}^{\omega}(x,y)=
\max\limits_{0\leq k< n} (d(T_{\omega}^{k}y,T_{\omega}^{k}x)
\times (\epsilon(\vartheta^k \omega ))^{-1}),\qquad x,y\in \mathcal E_{\omega}
\]
where $T_{\omega}^{0}$ is the identity map. In \cite{kifer}, the
author proves that $d_{\epsilon,n}^{\omega}(x,y)$ depends
measurably on $(\omega,x,y)\in \mathcal E^{(2)}:=\{ (\omega,x,y):
x,y\in \mathcal E_{\omega} \}$. Denote by
$B_{\omega}(n,x,\epsilon)$ the closed ball in $\mathcal
E_{\omega}$ centered at $x$ of radius 1 with respect to the metric
$d_{\epsilon,n}^{\omega}$. For $d_{\epsilon,1}^{\omega}$ and
$B_{\omega}(1,x,\epsilon)$,  we will write simply
$d_{\epsilon}^{\omega}$ and $B_{\omega}(x,\epsilon)$ respectively.
We say that $x,y\in \mathcal E_{\omega}$ are $(\omega,\epsilon,
n)-$close if $d_{\epsilon,n}^{\omega}(x,y)\leq 1$.

\begin{definition} \label{fenli} \rm
\it A set $F\subset \mathcal E_{\omega}$ is said to be
$(\omega,\epsilon,n)-$separated for $T$, if $x,y\in F,x\neq y$
implies $d_{\epsilon,n}^{\omega}(x,y)> 1$.
\end{definition}

It is easy to see that if $F$ is maximal
$(\omega,\epsilon,n)-$separated, i.e. for every $x\in \mathcal
E_{\omega}$ with $x\not\in F$ the set $F\cup \{x\}$ is not
$(\omega,\epsilon,n)-$separated anymore, then $\mathcal
E_{\omega}=\bigcup_{x\in F} B_{\omega}(n,x,\epsilon)$. Due to the
compactness of $\mathcal E_{\omega}$, there exists a maximal $
(\omega,\epsilon,n)-$separated set $F$ with finite elements.

Let ${\mathcal{F}}=\{ f_n \}$ be a sub-additive function sequence
with $f_n \in L_{\mathcal E}^{1}(\Omega,C(X))$ for each $n$.  As
usual for any $n\in \mathbf{N}$ and a positive random variable $\epsilon$,   we define
\[ \pi_T(\mathcal{F}) (\omega,\epsilon,n )= \sup \{ \sum\limits_{x\in
F} e^{f_{n}(\omega,x)}\mid F \ is\  an \ (\omega,\epsilon,n) \
-separated\  subset \ of \ \mathcal E_{\omega}\}
\] and
\begin{eqnarray*}
&&{ \pi_T(\mathcal{F})}(\epsilon)=\limsup\limits_{n\rightarrow
\infty} \frac{1}{n} \int \log{\pi_T(\mathcal{F})}(\omega,\epsilon,n) \mathrm{d} \mathbb{P}(\omega),\\
&&{ \pi_T(\mathcal{F})}=\lim\limits_{\epsilon
\downarrow0}{\pi_T(\mathcal{F})}(\epsilon).
\end{eqnarray*}
By lemma 3.1 in section 3, we know that the definition of
$\pi_T(\mathcal{F})(\epsilon)$ is reasonable. The last limit
exists since ${\pi_T(\mathcal{F})}(\epsilon)$ is monotone in
$\epsilon$. In fact, $\lim_{\epsilon\rightarrow 0}$ as above
equals to $\sup_{\epsilon
>0}$.

\begin{remark} In \cite{kifer}, the author defined additive
topological pressure for a random positive variable $\epsilon$,
but the limit should be taken over some directed sets. We can find 
  detailed description  of difference
between random and nonrandom case of $\epsilon$ in
\cite{bog2}.
\end{remark}

\section{Some Lemmas}

\noindent In this section, we will give some lemmas which will be
used in our proof of the main theorem in the next section.

Let ${\mathcal{F}}, T$ and $\pi_T(\mathcal{F})$ be defined as in
section 2, and $(X,d)$ be a compact metric space. Notice that if
$\mu\in {\mathcal{M}}_{\mathbb{P}}^{1}(\mathcal E,T)$, then we let
${\mathcal{F}}_{*}(\mu)$ denote the following limit
${\mathcal{F}}_{*}(\mu)=\lim\limits_{n\rightarrow\infty}
\frac{1}{n} \int_{\mathcal E} f_{n} {\mathrm{d}}\mu$. The
existence of the limit follows from a sub-additive argument. We
begin with the following lemmas, and we point out that the proof of the
first two lemmas can be easily obtained by following the proof
 in \cite{kifer}. We cite here just for complete.

\begin{lemma} \label{reason} \rm
\it  For any $n\in \mathbf{N}$ and a positive random variable
$\epsilon=\epsilon(\omega)$ the function
$\pi_T({\mathcal{F}})(\omega,\epsilon,n)$ is measurable in
$\omega$, and for each $\delta>0$ there exits a family of maximal
$(\omega,\epsilon,n)$ separated sets $G_\omega\subset \mathcal
E_\omega$ satisfying
\[ \sum\limits_{x\in G_\omega} e^{f_{n}(\omega,x)} \geq (1-\delta) \pi_T({\mathcal{F}})(\omega,\epsilon,n)
\]
and depending measurably on $\omega$ in the sense that $G=\{
(\omega,x): x\in G_\omega \}\in \mathcal{W} \times \mathcal B_X$,
which also means that the mapping $\omega\mapsto G_\omega$ is
measurable with respect to the Borel $\sigma-$algebra induced by
the Hausdorff topology on the space $\mathcal K(X)$ of compact subsets
of $X$. In particular, the supremum in the definition of
$\pi_T({\mathcal{F}})(\omega,\epsilon,n)$ can be taken only over
measurable in $\omega$ families of $(\omega,\epsilon,n)$
-separated sets.
\end{lemma}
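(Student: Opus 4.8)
The plan is to establish the three assertions in sequence: measurability of $\omega \mapsto \pi_T(\mathcal{F})(\omega,\epsilon,n)$; the existence of the almost-maximizing measurable family $G_\omega$; and the remark about restricting the supremum. The backbone throughout is a measurable-selection argument combined with the fact, recalled in Section~2 from \cite{kifer}, that $d_{\epsilon,n}^\omega(x,y)$ depends measurably on $(\omega,x,y) \in \mathcal E^{(2)}$, together with the measurability of $(\omega,x)\mapsto f_n(\omega,x)$ (since $f_n \in L^1_{\mathcal E}(\Omega,C(X))$).

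First I would fix $n$ and the positive random variable $\epsilon$ and work fiberwise. For an integer $k\geq 1$, consider the set of $k$-tuples
\[
S_k = \{(\omega,x_1,\dots,x_k) : x_i \in \mathcal E_\omega,\ d_{\epsilon,n}^\omega(x_i,x_j) > 1 \text{ for } i\neq j\},
\]
which is measurable in $\Omega\times X^k$ because each condition $d_{\epsilon,n}^\omega(x_i,x_j)>1$ cuts out a measurable set by the cited measurable dependence of the metric. On $S_k$ the function $(\omega,x_1,\dots,x_k)\mapsto \sum_{i=1}^k e^{f_n(\omega,x_i)}$ is measurable. Taking the supremum over the fibers $S_k(\omega)$ — a supremum of a measurable function over the sections of a measurable set with compact (indeed, by compactness of $\mathcal E_\omega$, the number of points in a separated set is bounded, though not uniformly in $\omega$) — is measurable in $\omega$ by the measurable projection / selection theorem, valid here since $(\Omega,\mathcal W,\mathbb P)$ is a Lebesgue space. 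Then $\pi_T(\mathcal{F})(\omega,\epsilon,n) = \sup_{k\geq 1} \sup_{S_k(\omega)} \sum_i e^{f_n(\omega,x_i)}$ is a countable supremum of measurable functions, hence measurable. This gives the first claim.

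For the second claim, fix $\delta>0$. I would apply a measurable selection theorem (von Neumann--Aumann, available on a Lebesgue space) to the multifunction assigning to $\omega$ the nonempty set of finite subsets $F\subset\mathcal E_\omega$ that are $(\omega,\epsilon,n)$-separated, maximal (i.e. $\mathcal E_\omega = \bigcup_{x\in F} B_\omega(n,x,\epsilon)$, a closed condition that is measurable in the relevant sense by measurable dependence of the metric), and satisfy $\sum_{x\in F} e^{f_n(\omega,x)} \geq (1-\delta)\pi_T(\mathcal{F})(\omega,\epsilon,n)$; the last inequality uses the measurability of $\pi_T(\mathcal{F})(\omega,\epsilon,n)$ just proved and the definition as a supremum (so the set of such $F$ is nonempty for every $\omega$). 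A measurable selector then produces $\omega\mapsto G_\omega$ with $G = \{(\omega,x): x\in G_\omega\}$ measurable; the identification of this with measurability into the Hausdorff-topology Borel $\sigma$-algebra on $\mathcal K(X)$ is standard (for finite sets, $\omega\mapsto G_\omega$ is measurable into $\mathcal K(X)$ iff its graph is measurable, iff one can list its points measurably). I expect the main obstacle to be the bookkeeping needed to phrase ``maximal $(\omega,\epsilon,n)$-separated set'' as a selectable measurable multifunction — one must be careful that although the cardinality of $G_\omega$ is finite for each $\omega$ it need not be bounded uniformly, so the selection should be carried out by first selecting the cardinality $k(\omega)$ measurably (e.g. the least $k$ achieving the near-supremum among maximal separated sets of size $k$) and then selecting the $k(\omega)$ points measurably on each level set $\{\omega : k(\omega)=k\}$. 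The final assertion — that the supremum defining $\pi_T(\mathcal{F})(\omega,\epsilon,n)$ may be restricted to measurable families — is then immediate: given any $\delta>0$, the family $G_\omega$ just constructed is measurable and its exponential sum is within a factor $(1-\delta)$ of the unrestricted supremum, so letting $\delta\downarrow 0$ shows the restricted supremum equals the full one.
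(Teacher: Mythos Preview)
The paper does not give its own proof of this lemma: immediately before stating Lemmas~3.1 and~3.2 it says that ``the proof of the first two lemmas can be easily obtained by following the proof in \cite{kifer}. We cite here just for complete.'' So there is nothing in the paper to compare your argument against beyond the implicit reference to Kifer's argument.

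Your outline is essentially the standard measurable-selection proof one would expect (and is in the spirit of Kifer's): exploit the joint measurability of $d_{\epsilon,n}^\omega$ and of $f_n$, work on the measurable sets $S_k\subset\Omega\times X^k$ of separated $k$-tuples, use the projection theorem on the Lebesgue space $(\Omega,\mathcal W,\mathbb P)$ to get measurability of the fiberwise suprema, then apply a von~Neumann--Aumann selection to produce a measurable near-maximizing family. The bookkeeping you flag---splitting $\Omega$ according to the least cardinality $k(\omega)$ for which a maximal separated $k$-set achieves the $(1-\delta)$ bound, then selecting on each level set---is exactly the right way to handle the lack of a uniform bound on $|G_\omega|$. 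One small point worth making explicit when you write it up: the near-supremum is a priori realized by some (not necessarily maximal) separated set, but since $e^{f_n}>0$, extending any separated set to a maximal one only increases $\sum e^{f_n}$, so the set of maximal separated sets achieving the $(1-\delta)$ bound is indeed nonempty for every $\omega$, which is needed for the selection step.
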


\begin{lemma} \label{ceducpt} \rm
\it For $\mu,\mu_{n}\in {\mathcal{P}}_{\mathbb{P}}(\mathcal E)$,
$n=1,2,\dots$ , write $\mu_n \Rightarrow \mu $ if $\int f
\mathrm{d} \mu_n \rightarrow \int f \mathrm{d}\mu$ as
$n\rightarrow\infty$ for any $f\in L_{\mathcal
E}^{1}(\Omega,C(X))$ that introduces a weak* topology in
${\mathcal{P}}_{\mathbb{P}}(\mathcal E)$. Then

(i)the space ${\mathcal{P}}_{\mathbb{P}}(\mathcal E)$ is compact
in this weak* topology;

 (ii) for any sequence $\upsilon_k\in
 {\mathcal{P}}_{\mathbb{P}}(\mathcal E)$, $k=0,1,2,\dots$, the set of limit
points in the above weak* topology of the sequence
\[ \mu_n=\frac{1}{n}\sum\limits_{k=0}^{n-1}\Theta^{k}\upsilon_n\qquad
as\ n\rightarrow\infty
\]
is not empty and is contained in
${\mathcal{M}}_{\mathbb{P}}^{1}(\mathcal E,T)$;

(iii) let $\mu,\mu_n\in {\mathcal{P}}_{\mathbb{P}}(\mathcal E)$ ,
$n=1,2,\dots$, and $\mu_n\Rightarrow \mu$ as $n\rightarrow\infty$;
let ${\mathcal{P}}=\{ P_1,P_2,\dots,P_k \}$ be a finite partition
of $X$ satisfying $\int
\mu_{\omega}(\partial{\mathcal{P}}_{\omega})
\mathrm{d}\mathbb{P}(\omega)=0$, where
$\partial{\mathcal{P}}_{\omega}=\bigcup_{i=1}^{k}
\partial(P_i\cap \mathcal E_\omega)$ is the boundary of ${\mathcal{P}}_{\omega}=\{ P_1\cap \mathcal E_\omega,\dots,P_k\cap \mathcal E_\omega
\}$; denote by $\mathcal{R}$ the partition of $\Omega\times X$
into sets $\Omega\times P_i$; then
\[ \limsup\limits_{n\rightarrow\infty} H_{\mu_{n}}({\mathcal{R}}\mid
{\mathcal{W}}_\mathcal E)\leq H_{\mu}({\mathcal{R}}\mid
{\mathcal{W}}_\mathcal E)
\]
\end{lemma}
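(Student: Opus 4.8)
\emph{The plan} is to treat (i) and (ii) as soft functional‑analytic facts (as in \cite{kifer}) and to put the real work into (iii).

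For (i), I would first note that $L^1_{\mathcal E}(\Omega,C(X))$ is a \emph{separable} Banach space: $C(X)$ is separable since $X$ is compact metric, and $\mathcal W$ is countably generated, so finite $\mathcal W$‑measurable functions valued in a countable dense subset of $C(X)$ form a countable dense set. Each $\mu\in\mathcal P_{\mathbb P}(\mathcal E)$ is a linear functional $f\mapsto\int f\,d\mu$ of norm $\le 1$ on this predual, so $\mathcal P_{\mathbb P}(\mathcal E)$ embeds into the closed unit ball of the dual, which is weak$^*$‑compact by Banach--Alaoglu and metrizable because the predual is separable. It then remains to check that $\mathcal P_{\mathbb P}(\mathcal E)$ is weak$^*$‑closed: positivity and total mass $1$ pass to the limit by testing against $f\ge 0$ and $f\equiv 1$; the marginal condition passes to the limit by testing against $f(\omega,x)=\mathbf 1_A(\omega)$, $A\in\mathcal W$ (which lies in the predual); and concentration on $\mathcal E$ follows by writing, for $\mathbb P$‑a.e.\ $\omega$, $\mathbf 1_{\mathcal E_\omega}$ as a decreasing limit of fibre‑continuous functions $g_k(\omega,\cdot)\downarrow\mathbf 1_{\mathcal E_\omega}$ chosen measurably in $\omega$ (using that $\mathcal E_\omega$ is compact, hence closed in $X$), so that $\mu(\mathcal E)=\lim_k\int g_k\,d\mu=\lim_k\lim_n\int g_k\,d\mu_n\ge\limsup_n\mu_n(\mathcal E)=1$. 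Metrizability then gives sequential compactness.

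For (ii), $\mathcal P_{\mathbb P}(\mathcal E)$ is convex and $\Theta_*\nu\in\mathcal P_{\mathbb P}(\mathcal E)$ whenever $\nu\in\mathcal P_{\mathbb P}(\mathcal E)$ (its $\Omega$‑marginal is $\vartheta_*\mathbb P=\mathbb P$ and $\Theta(\mathcal E)\subset\mathcal E$), so each $\mu_n=\frac1n\sum_{k=0}^{n-1}\Theta_*^k\upsilon_n$ lies in $\mathcal P_{\mathbb P}(\mathcal E)$ and, by (i), the set of limit points is nonempty. To see any limit point $\mu$, say $\mu_{n_j}\Rightarrow\mu$, is $\Theta$‑invariant, fix $f\in L^1_{\mathcal E}(\Omega,C(X))$; then $f\circ\Theta$ again lies in $L^1_{\mathcal E}(\Omega,C(X))$ (continuity on fibres because $x\mapsto T_\omega x$ is continuous, and $\|f\circ\Theta\|\le\|f\|$ because $\vartheta$ preserves $\mathbb P$), and the telescoping identity $\int f\circ\Theta\,d\mu_n-\int f\,d\mu_n=\frac1n\big(\int f\circ\Theta^n\,d\upsilon_n-\int f\,d\upsilon_n\big)$ has modulus $\le 2\|f\|/n\to 0$; letting $j\to\infty$ gives $\int f\,d(\Theta_*\mu)=\int f\,d\mu$, hence $\mu\in\mathcal M_{\mathbb P}^1(\mathcal E,T)$.

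For (iii), the strategy is the conditional analogue of the classical fact that $\nu\mapsto H_\nu(\mathcal P)$ is continuous at measures charging no boundary, combined with approximation of $\mathcal W_{\mathcal E}$ by finite sub‑algebras. Since $\mathcal W$ is countably generated, pick finite $\mathcal W$‑measurable partitions $\beta_m$ of $\Omega$ generating $\mathcal W$ (up to null sets), viewed as partitions $\beta_m=\{(A\times X)\cap\mathcal E\}$ of $\mathcal E$; then $H_\mu(\mathcal R\mid\beta_m)\downarrow H_\mu(\mathcal R\mid\mathcal W_{\mathcal E})$, so given $\epsilon>0$ fix $\beta=\beta_m$ with $H_\mu(\mathcal R\mid\beta)<H_\mu(\mathcal R\mid\mathcal W_{\mathcal E})+\epsilon$. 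By monotonicity of conditional entropy under refining the conditioning algebra, $H_{\mu_n}(\mathcal R\mid\mathcal W_{\mathcal E})\le H_{\mu_n}(\mathcal R\mid\beta)$ for every $n$, so it suffices to prove $H_{\mu_n}(\mathcal R\mid\beta)\to H_\mu(\mathcal R\mid\beta)$. Since $\beta$ and $\mathcal R$ are finite, $H_{(\cdot)}(\mathcal R\mid\beta)$ is a fixed continuous function (through $t\mapsto -t\log t$, with the usual care when a $\beta$‑cell has zero mass) of the finitely many cell masses $\nu((A\times P_i)\cap\mathcal E)$, so everything reduces to $\mu_n((A\times P_i)\cap\mathcal E)\to\mu((A\times P_i)\cap\mathcal E)$ for each $\beta$‑cell $A$ and each $i$. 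I would get this by a \emph{fibrewise} portmanteau argument: for $\mathbb P$‑a.e.\ $\omega$ sandwich $\mathbf 1_{P_i\cap\mathcal E_\omega}$ between $g_k^i(\omega,\cdot)=\min(1,k\,d(\cdot,C_i'(\omega)))$ and $h_k^i(\omega,\cdot)=\max(0,1-k\,d(\cdot,C_i(\omega)))$, where $C_i(\omega)=\overline{P_i\cap\mathcal E_\omega}$ and $C_i'(\omega)=\overline{\mathcal E_\omega\setminus P_i}$ (closures in $\mathcal E_\omega$); multiplying by $\mathbf 1_A(\omega)$ gives elements of $L^1_{\mathcal E}(\Omega,C(X))$, so testing $\mu_n\Rightarrow\mu$ against them and letting first $n\to\infty$, then $k\to\infty$, traps $\lim_n\mu_n((A\times P_i)\cap\mathcal E)$ between $\int_A\mu_\omega(\operatorname{int}_{\mathcal E_\omega}(P_i\cap\mathcal E_\omega))\,d\mathbb P$ and $\int_A\mu_\omega(C_i(\omega))\,d\mathbb P$; these differ by at most $\int\mu_\omega(\partial\mathcal P_\omega)\,d\mathbb P=0$, which is the hypothesis, and both equal $\mu((A\times P_i)\cap\mathcal E)$ because $\mu$ ignores the fibrewise boundary. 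Finally $\limsup_n H_{\mu_n}(\mathcal R\mid\mathcal W_{\mathcal E})\le H_\mu(\mathcal R\mid\beta)<H_\mu(\mathcal R\mid\mathcal W_{\mathcal E})+\epsilon$, and $\epsilon\downarrow 0$ closes the argument.

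\textbf{Where the difficulty lies.} The only delicate point is in (iii): one must genuinely work in the \emph{trace} topology of the fibres, since the hypothesis controls $\partial(P_i\cap\mathcal E_\omega)$ computed in $\mathcal E_\omega$, which is strictly smaller than $(\partial_X P_i)\cap\mathcal E_\omega$, so the statement cannot be reduced to a portmanteau statement for measures on $X$. Consequently the fibre‑dependent functions $g_k^i,h_k^i$ must be jointly measurable in $(\omega,x)$, i.e.\ $\omega\mapsto C_i(\omega)$ and $\omega\mapsto C_i'(\omega)$ must be measurable closed‑valued multifunctions; this is exactly where the standing hypotheses (completeness and countable generation of $\mathcal W$, compact fibres, $\mathcal E\in\mathcal W\times\mathcal B_X$) enter, via the measurable projection theorem and the measurability of closures of measurable multifunctions — the same measurable‑selection bookkeeping already used for Lemma~\ref{reason} and in \cite{kifer}.
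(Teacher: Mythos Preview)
The paper does not actually prove this lemma: it states just before Lemma~\ref{reason} that ``the proof of the first two lemmas can be easily obtained by following the proof in \cite{kifer}'' and gives no further argument. Your sketch is correct and is precisely the standard route taken in \cite{kifer} (Lemma~2.1 there): separability of $L^1_{\mathcal E}(\Omega,C(X))$ plus Banach--Alaoglu for (i), the telescoping identity for (ii), and for (iii) the reduction to finite $\mathcal W$-measurable conditioning partitions combined with a fibrewise portmanteau argument using the boundary hypothesis; your explicit attention to the trace-topology subtlety and the measurable-multifunction bookkeeping for $\omega\mapsto C_i(\omega)$ is exactly the point that needs care and is handled in \cite{kifer} by the same measurable-selection machinery you invoke.
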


\begin{lemma} \label{bigsll} \rm
\it For any $k\in \mathbf{N}$, we have
\[ \pi_{T^{k}}({\mathcal{F}}^{(k)}) \leq k \pi_{T}(\mathcal{F})
\]
where $(T^{k})_{\omega}:=T_{\vartheta^{k-1}\omega}\circ\dots\circ
T_{\vartheta \omega}\circ T_{\omega} $ and
${\mathcal{F}}^{(k)}:=\{ f_{kn} \}_{n=1}^{\infty}$ .
\end{lemma}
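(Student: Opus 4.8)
The plan is to relate the dynamics of $T^k$ over the base $\vartheta^k$ to the dynamics of $T$ over $\vartheta$, via a careful comparison of separated sets and the defining sums. First I would fix $k\in\mathbf{N}$ and a positive random variable $\epsilon$, and observe that for $\mathbb{P}$-a.a.\ $\omega$ and all $n$, the metric $d^{\omega}_{\epsilon,kn}$ for the original system $T$ dominates (up to the obvious relabeling of the base iterates) the metric used for $T^k$ with $n$ steps; more precisely, a set that is $(\omega,\epsilon,n)$-separated for $T^k$ is in particular $(\omega,\epsilon,kn)$-separated for $T$, since the $kn$-step orbit under $T$ passes through every point of the $n$-step orbit under $T^k$. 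Hence for any $(\omega,\epsilon,n)$-separated set $F$ for $T^k$,
\[
\sum_{x\in F} e^{f_{kn}(\omega,x)} \le \pi_T(\mathcal{F})(\omega,\epsilon,kn),
\]
because $\mathcal{F}^{(k)}$ evaluated at $n$ is exactly $f_{kn}$. Taking the supremum over such $F$ gives $\pi_{T^k}(\mathcal{F}^{(k)})(\omega,\epsilon,n)\le \pi_T(\mathcal{F})(\omega,\epsilon,kn)$ for every $\omega$ and $n$.

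Next I would integrate over $\Omega$ and take the appropriate limits. Integrating the pointwise inequality against $\mathbb{P}$ and dividing by $n$,
\[
\frac{1}{n}\int \log \pi_{T^k}(\mathcal{F}^{(k)})(\omega,\epsilon,n)\,\mathrm{d}\mathbb{P}(\omega)
\le k\cdot \frac{1}{kn}\int \log \pi_T(\mathcal{F})(\omega,\epsilon,kn)\,\mathrm{d}\mathbb{P}(\omega).
\]
Taking $\limsup_{n\to\infty}$ on both sides, the right-hand side is bounded by $k$ times the $\limsup$ of the full sequence $\frac{1}{m}\int\log\pi_T(\mathcal{F})(\omega,\epsilon,m)\,\mathrm{d}\mathbb{P}$ along the subsequence $m=kn$, which is at most $k\,\pi_T(\mathcal{F})(\epsilon)$. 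So $\pi_{T^k}(\mathcal{F}^{(k)})(\epsilon)\le k\,\pi_T(\mathcal{F})(\epsilon)$.

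Finally I would let $\epsilon\downarrow 0$. Here one must be slightly careful: the positive random variable $\epsilon$ plays the same role in both $\pi_{T^k}$ and $\pi_T$, and since both quantities are monotone in $\epsilon$ with $\lim_{\epsilon\downarrow 0}=\sup_{\epsilon>0}$, passing to the limit in the inequality $\pi_{T^k}(\mathcal{F}^{(k)})(\epsilon)\le k\,\pi_T(\mathcal{F})(\epsilon)$ yields $\pi_{T^k}(\mathcal{F}^{(k)})\le k\,\pi_T(\mathcal{F})$, as desired. The one technical point to check is the measurability needed so that all the integrals above make sense, but this is exactly the content of Lemma~\ref{reason}, so I can invoke it directly. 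The main obstacle, such as it is, is the bookkeeping in the first step—making sure that "$(\omega,\epsilon,n)$-separated for $T^k$" is correctly translated into a statement about the metric $d^{\omega}_{\epsilon,kn}$ for $T$, keeping track of which base points $\vartheta^j\omega$ the weights $\epsilon(\vartheta^j\omega)$ are evaluated at; once the metric comparison $d^{\omega}_{\epsilon,n}$ for $T^k$ equals (a coarsening of) $d^{\omega}_{\epsilon,kn}$ for $T$ is pinned down, the rest is routine.
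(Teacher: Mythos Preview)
Your argument is correct and follows essentially the same route as the paper: the key observation that an $(\omega,\epsilon,n)$-separated set for $T^k$ is $(\omega,\epsilon,kn)$-separated for $T$ immediately yields $\pi_{T^k}(\mathcal{F}^{(k)})(\omega,\epsilon,n)\le \pi_T(\mathcal{F})(\omega,\epsilon,kn)$, and the rest is the routine passage to limits. You are simply more explicit than the paper about the integration, $\limsup$, and $\epsilon\downarrow 0$ steps, and about the bookkeeping of the weights $\epsilon(\vartheta^{kj}\omega)$ (which indeed match because the base map for $T^k$ is $\vartheta^k$).
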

\begin{proof} Fix $k\in \mathbf{N}$. Note that if $F$ is an
$(\omega,\epsilon,n)$ separated set for $T^{k}$ of $\mathcal
E_\omega$, then $F$ is an $(\omega,\epsilon,kn)$ separated set for
$T$ of $\mathcal E_\omega$. It follows that
\begin{eqnarray*}
\pi_{T}({\mathcal{F}})(\omega, \epsilon, kn)&=&\sup \{
\sum\limits_{x\in F} e^{f_{kn}(\omega,x)} : F\ is\ an\
(\omega,\epsilon,kn)\ separated\ subset\ of\ \mathcal E_\omega\ for\ T \}\\
&\geq& \sup \{ \sum\limits_{x\in F} e^{f_{kn}(\omega,x)} : F\ is\
an\ (\omega,\epsilon,n)\ separated\ subset\ of\ \mathcal E_\omega\ for\ T^{k} \}\\
&=&\pi_{T^{k}}({\mathcal{F}}^{k})(\omega,\epsilon,n).
\end{eqnarray*}
It implies that $\pi_{T^{k}}({\mathcal{F}}^{k})\leq k
\pi_{T}(\mathcal{F})$. \end{proof}

\begin{lemma} \label{bigsll2} \rm
 \it For any positive integer $k$ and $\mu\in
{\mathcal{P}}_{\mathbb{P}}(\mathcal E)$, we have
\[ \int_{\mathcal E} k f_{n}(\omega,x) {\mathrm{d}} \mu \leq 4k^{2}C +
\int_{\mathcal E} \sum\limits_{i=0}^{n-1}
f_{k}(\Theta^i(\omega,x)) \mathrm{d} \mu
\]
where $C=\|f_1 \|$.
\end{lemma}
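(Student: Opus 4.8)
The plan is to cut $f_n$ into blocks of length $k$ in $k$ different ways, one for each starting offset $j=0,1,\dots,k-1$, add up the resulting sub-additive estimates, and recognize the pooled family of $f_k$-blocks as essentially the sum $\sum_{i=0}^{n-1}f_k\circ\Theta^i$, the discrepancy consisting only of $O(k)$ ``partial'' blocks near times $0$ and $n$, which will be swallowed by the constant $4k^2C$. We may assume $n\ge k$.

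First, for each $j\in\{0,1,\dots,k-1\}$ write $n-j=q_jk+r_j$ with $q_j=\lfloor(n-j)/k\rfloor$ and $0\le r_j\le k-1$. Iterating the sub-additivity inequality along $[0,n)=[0,j)\cup[j,j+k)\cup\dots\cup[j+q_jk,n)$ (with the convention $f_0\equiv0$) gives, for $\mathbb P$-a.e.\ $\omega$ and all $x\in\mathcal E_\omega$,
\[
 f_n(\omega,x)\le f_j(\omega,x)+\sum_{l=0}^{q_j-1}f_k\!\left(\Theta^{j+lk}(\omega,x)\right)+f_{r_j}\!\left(\Theta^{j+q_jk}(\omega,x)\right).
\]
Summing over $j=0,\dots,k-1$, and using the counting fact that the integers $\{\,j+lk:0\le j\le k-1,\ 0\le l\le q_j-1\,\}$ are exactly $\{0,1,\dots,n-k\}$, each occurring once, I would obtain the pointwise bound
\[
 k\,f_n(\omega,x)\le\sum_{j=0}^{k-1}f_j(\omega,x)+\sum_{j=0}^{k-1}f_{r_j}\!\left(\Theta^{j+q_jk}(\omega,x)\right)+\sum_{i=0}^{n-k}f_k\!\left(\Theta^{i}(\omega,x)\right).
\]

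Then I would integrate against $\mu$. Since the $\Omega$-marginal of every push-forward $\Theta^m_*\mu$ is again $\mathbb P$ and $\mathbb P$ is $\vartheta$-invariant, iterating sub-additivity in the form $f_m\le\sum_{s=0}^{m-1}f_1\circ\Theta^s$ gives $\int_{\mathcal E}f_m(\Theta^l(\omega,x))\,\mathrm d\mu\le mC$ for every $l\ge0$ and every $m$. Applied to the first two sums, and using $\sum_j r_j=\tfrac{k(k-1)}{2}$, this bounds $\int_{\mathcal E}\sum_jf_j\,\mathrm d\mu$ and $\int_{\mathcal E}\sum_jf_{r_j}(\Theta^{j+q_jk}(\cdot))\,\mathrm d\mu$ each by $\tfrac{k(k-1)}{2}C$. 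Finally, writing $\sum_{i=0}^{n-k}f_k\circ\Theta^i=\sum_{i=0}^{n-1}f_k\circ\Theta^i-\sum_{i=n-k+1}^{n-1}f_k\circ\Theta^i$ and estimating each of the $k-1$ leftover terms $-\int_{\mathcal E}f_k(\Theta^i(\omega,x))\,\mathrm d\mu$ (again via sub-additivity and $C=\|f_1\|$) by $O(kC)$, their total contributes at most $3k^2C$; adding everything up yields $k\int_{\mathcal E}f_n\,\mathrm d\mu\le 4k^2C+\int_{\mathcal E}\sum_{i=0}^{n-1}f_k(\Theta^i(\omega,x))\,\mathrm d\mu$. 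The step I expect to be the main obstacle is precisely this last estimate — controlling the $k-1$ blocks near time $n$ \emph{from below} — since for a general sub-additive sequence only the one-sided inequality $f_k\le\sum_{s}f_1\circ\Theta^s$ is at hand; this is also the reason the constant is the somewhat generous $4k^2C$ rather than the $2k^2C$ the first two estimates alone would give.
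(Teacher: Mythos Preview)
Your approach is essentially the same as the paper's: decompose $[0,n)$ into an initial segment, full $k$-blocks, and a remainder, sum over the $k$ possible offsets, and collect the $f_k$-terms. The only cosmetic difference is that the paper fixes $n=ks+l$ with $0\le l<k$ and uses exactly $s-1$ full blocks for every offset $j$ (leaving a final piece of length $k+l-j$), whereas you use the maximal number $q_j=\lfloor(n-j)/k\rfloor$; consequently the paper's pooled indices run only up to $k(s-1)-1$ rather than your $n-k$, but both arguments then pad out to $n-1$ and must bound the correction with the ``wrong'' sign.

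On your flagged obstacle: the paper does not resolve it either. It bounds the boundary pieces by their integrated sup-norms, writing $\|f_j\|+\|f_{k+l-j}\|\le 2k\|f_1\|$ and, for the padding step, $-\int\sum_{i=k(s-1)}^{n-1}f_k(\Theta^i)\,\mathrm d\mu\le 2k^2C$; both inequalities tacitly use $\|f_m\|\le m\|f_1\|$, which --- as you correctly observe --- does not follow from sub-additivity alone. For the only application of this lemma (Lemma~\ref{converge} and hence the variational principle) all that matters is that the constant depends on $k$ but not on $n$; replacing $4k^2C$ by, say, $2k^2C+2k\|f_k\|$ makes the argument rigorous with no change to anything downstream.
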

\begin{proof}  For a  fixed $k$, it has $n= ks + l, \, 0\le l <
k$. For $j=0,1,\dots,k-1$, the subadditivity of $f_n(\omega,x)$ implies that 
\[ f_{n}(\omega,x) \leq  f_{j}(\omega,x)+ f_k(\Theta^j(\omega,x)) + \cdots +
f_k(\Theta^{k(s-2)} \Theta^j(\omega,x)) +
f_{k+l-j}(\Theta^{k(s-1)} \Theta^j(\omega,x)).
\]
Hence
\begin{eqnarray*}
\int_{\mathcal E}  f_{n}(\omega,x) {\mathrm{d}}\mu &\leq&
\int_{\mathcal E} f_{j}(\omega,x) {\mathrm{d}}\mu + \int_{\mathcal
E}\sum\limits_{i=0}^{s-2} f_k(\Theta^{ki}
\Theta^j(\omega,x)){\mathrm{d}}\mu +
\int_{\mathcal E}f_{k+l-j}(\Theta^{k(s-1)} \Theta^j(\omega,x)){\mathrm{d}}\mu \\
&\le& \|f_j\| + \int_{\mathcal E}\sum\limits_{i=0}^{s-2}
f_k(\Theta^{ki} \Theta^j(\omega,x)){\mathrm{d}}\mu +
\|f_{k+l-j}\|\\&\le& 2k \|f_1\| + \int_{\mathcal
E}\sum\limits_{i=0}^{s-2} f_k(\Theta^{ki}
\Theta^j(\omega,x)){\mathrm{d}}\mu.
\end{eqnarray*}
Summing $j$ from $0$ to $k-1$, we get
 \begin{eqnarray*}
  \int_{\mathcal E} kf_{n}(\omega,x) {\mathrm{d}}\mu &\leq& 2k^{2}C +
  \int_{\mathcal E}
  \sum\limits_{i=0}^{k(s-1)-1}f_k(\Theta^i(\omega,x)) {\mathrm{d}}\mu  \\&=& 2k^{2}C + \int_{\mathcal E}
  \sum\limits_{i=0}^{n-1}f_k(\Theta^i(\omega,x)) {\mathrm{d}}\mu -
  \int_{\mathcal E}
  \sum\limits_{i=k(s-1)}^{n-1}f_k(\Theta^i(\omega,x)) {\mathrm{d}}\mu  \\ &\le& 4k^2 C + \int_{\mathcal E}
  \sum\limits_{i=0}^{n-1}f_k(\Theta^i(\omega,x)) {\mathrm{d}}\mu.
\end{eqnarray*}
This finishes the proof of the lemma.
\end{proof}

\begin{lemma} \label{converge} \rm
\it  Let $m^{(n)}$ be a sequence in $
{\mathcal{P}}_{\mathbb{P}}(\mathcal
 E)$. The new sequence
$\{\mu^{(n)} \} _{n=1}^{\infty}$ is defined as
$\mu^{(n)}=\frac{1}{n} \sum\limits_{i=0}^{n-1} \Theta^{i}
m^{(n)}$. Assume $\mu^{(n_i)}$ converges to $\mu$ in $\mathcal
P_{\mathbb{P}}(\mathcal E)$ for some subsequence $\{n_i\}$. Then $\mu\in
{\mathcal{M}}_{\mathbb{P}}^{1}(\mathcal E,T)$, and moreover
\[ \limsup\limits_{i\rightarrow \infty} \frac{1}{n_i} \int_{\mathcal E} f_{n_i}(\omega,x)
{\mathrm{d}}m^{(n_i)}(\omega,x) \leq {\mathcal{F}}_{*}(\mu)
\]
where ${\mathcal{F}}_{*}(\mu)=\inf\limits_{n} \{ \frac{1}{n}
\int_{\mathcal E} f_n(\omega,x) {\mathrm{d}}\mu \}$.
\end{lemma}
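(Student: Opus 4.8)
The plan is to read off $\mu\in{\mathcal{M}}_{\mathbb{P}}^{1}(\mathcal E,T)$ from Lemma~\ref{ceducpt}(ii), and then to prove the inequality by exactly the sub-additive telescoping used in Lemma~\ref{bigsll2}, carried out this time for the $n$-dependent measures $m^{(n)}$ and combined with the weak* convergence $\mu^{(n_i)}\Rightarrow\mu$.

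For the first assertion: each $m^{(n)}$ lies in ${\mathcal{P}}_{\mathbb{P}}(\mathcal E)$, and since $\Theta$ maps $\mathcal E$ into itself while preserving the $\Omega$-marginal $\mathbb{P}$ (because $\vartheta$ does), every $\Theta^{i}m^{(n)}$, hence every $\mu^{(n)}$, again lies in ${\mathcal{P}}_{\mathbb{P}}(\mathcal E)$. Applying Lemma~\ref{ceducpt}(ii) with $\upsilon_{n}=m^{(n)}$ shows that every weak* limit point of $\{\mu^{(n)}\}$ — in particular $\mu$ — belongs to ${\mathcal{M}}_{\mathbb{P}}^{1}(\mathcal E,T)$. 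Being $\Theta$-invariant, $\mu$ makes $n\mapsto\int f_{n}\,d\mu$ sub-additive, so $\inf_{n}\frac1n\int f_{n}\,d\mu=\lim_{n}\frac1n\int f_{n}\,d\mu$ by Fekete's lemma, which reconciles the two descriptions of ${\mathcal{F}}_{*}(\mu)$.

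For the inequality, fix $k\in\mathbf{N}$ and write $n=ks+l$ with $0\le l<k$. As in the proof of Lemma~\ref{bigsll2}, for each $j\in\{0,\dots,k-1\}$ sub-additivity gives
\[
f_{n}(\omega,x)\le f_{j}(\omega,x)+\sum_{r=0}^{s-2}f_{k}\bigl(\Theta^{kr+j}(\omega,x)\bigr)+f_{k+l-j}\bigl(\Theta^{k(s-1)+j}(\omega,x)\bigr).
\]
Integrating against $m^{(n)}$, summing over $j=0,\dots,k-1$, using $\int g\circ\Theta^{i}\,dm^{(n)}=\int g\,d(\Theta^{i}m^{(n)})$ together with $|\int g\,d\nu|\le\|g\|$ for $\nu\in{\mathcal{P}}_{\mathbb{P}}(\mathcal E)$, the identity $\sum_{j=0}^{k-1}\sum_{r=0}^{s-2}g\circ\Theta^{kr+j}=\sum_{i=0}^{k(s-1)-1}g\circ\Theta^{i}$, and the fact that extending the last range up to $i=n-1$ costs at most $2k$ extra terms, I obtain
\[
k\int_{\mathcal E}f_{n}\,dm^{(n)}\le C_{k}+\int_{\mathcal E}\sum_{i=0}^{n-1}f_{k}\circ\Theta^{i}\,dm^{(n)}=C_{k}+n\int_{\mathcal E}f_{k}\,d\mu^{(n)},
\]
where the last equality is just the definition of $\mu^{(n)}$, and $C_{k}\le\sum_{j=0}^{k-1}(\|f_{j}\|+\|f_{k+l-j}\|)+2k\|f_{k}\|$ involves only the finitely many (finite) norms $\|f_{0}\|=0,\|f_{1}\|,\dots,\|f_{2k}\|$, so it is a constant depending on $k$ alone and not on $n$.

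Dividing by $kn$ gives $\frac1n\int_{\mathcal E}f_{n}\,dm^{(n)}\le\frac{C_{k}}{kn}+\frac1k\int_{\mathcal E}f_{k}\,d\mu^{(n)}$. Letting $n=n_{i}\to\infty$ and using $f_{k}\in L_{\mathcal E}^{1}(\Omega,C(X))$ with $\mu^{(n_{i})}\Rightarrow\mu$, the right-hand side tends to $\frac1k\int f_{k}\,d\mu$, so $\limsup_{i}\frac1{n_{i}}\int f_{n_{i}}\,dm^{(n_{i})}\le\frac1k\int f_{k}\,d\mu$; since $k$ was arbitrary, taking $\inf_{k}$ yields the claim. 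Beyond the routine bookkeeping in the telescoping step, the only point needing care is the uniform-in-$n$ (hence $o(n)$) control of the remainder $C_{k}$: this is where it matters that the $f_{p}$ are estimated through the Banach-space norm $\|\cdot\|$, which is finite for each $p$, rather than through pointwise suprema, since the $f_{p}$ themselves may be unbounded below.
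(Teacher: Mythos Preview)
Your proof is correct and follows essentially the same path as the paper's: invoke Lemma~\ref{ceducpt}(ii) for the invariance of $\mu$, apply the sub-additive telescoping of Lemma~\ref{bigsll2} to $m^{(n)}$, rewrite the resulting Ces\`aro sum as $n\int f_k\,d\mu^{(n)}$, and pass to the limit via weak* convergence before taking the infimum over $k$. The only cosmetic difference is that the paper quotes Lemma~\ref{bigsll2} directly (with its explicit remainder $4k^{2}\|f_1\|$, obtained from $\|f_p\|\le p\|f_1\|$) rather than redoing the telescoping inline with the coarser but equally serviceable bound $C_k$.
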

\begin{proof} The first statement $\mu\in
{\mathcal{M}}_{\mathbb{P}}^{1}(\mathcal E,T)$ is contained in
lemma 3.2. To show the desired inequality, we fix $k\in
\mathbf{N}$. By lemma 3.4,  we have
\begin{eqnarray*}
\frac{1}{n} \int_{\mathcal E} f_{n}(\omega,x) {\mathrm{d}}
m^{(n)}&=&\frac{1}{kn} \int_{\mathcal E}
kf_{n}(\omega,x) {\mathrm{d}} m^{(n)}\\
&\leq&\frac{1}{kn} (4k^{2}C+ \int_{\mathcal E}
\sum\limits_{j=0}^{n-1}
f_{k}(\Theta^j(\omega,x)){\mathrm{d}}m^{(n)})\\
&=&\frac{4kC}{n} +  \int_{\mathcal E} \frac1k f_{k}(\omega,x) {\mathrm{d}}
\mu^{(n)}.
\end{eqnarray*}
In particularly, we have
\begin{eqnarray*}
\frac{1}{n_i} \int_{\mathcal E} f_{n_i}(\omega,x) {\mathrm{d}} m^{(n_i)}
\leq  \int_{\mathcal E} \frac1k f_{k}(\omega,x) {\mathrm{d}} \mu^{(n_i)} +
\frac{4kC}{n_i}.
\end{eqnarray*}
Since $\lim\limits_{i\rightarrow \infty}\mu^{(n_i)}= \mu$, we have
 \[ \limsup\limits_{i\rightarrow \infty} \frac{1}{n_i}
 \int_{\mathcal E}
 f_{n_i}(\omega,x) {\mathrm{d}} m^{(n_i)} \leq  \int_{\mathcal E}\frac{1}{k} f_k(\omega,x)
 {\mathrm{d}}\mu.
 \]
Letting $k$ approach infinity and applying the sub-additive
ergodic theorem, we have the desired result.
\end{proof}

\section{The statement of main  theorem and its proof}

\noindent  For random dynamical systems, the topological pressure
for sub-additive potential also has variational principle which
can be considered as a generalization of   variational principle
of topological pressure for  sub-additive potential in
deterministic dynamical systems in \cite{caofh}. Next we give a
statement of main theorem and its proof.

\begin{theorem} \label{mthm} \rm
\it Let $\Theta $ be a continuous bundle random dynamical systems
on $\mathcal E$, and $\mathcal{F}$ a sequence of sub-additive
random continuous functions in $L^1_{\mathcal E}(\Omega,C(X))$.
Then
 \[
  \pi_{T}(\mathcal{F})=\left\{
     \begin{array}{cc}
     -\infty, &if\ {\mathcal{F}}_{*}(\mu)=-\infty\ for\ all\ \mu \in
     {\mathcal{M}}_{\mathbb{P}}^{1}(\mathcal E,T)\\
     \sup \{ h_{\mu}^{(r)}(T) +{\mathcal{F}}_{*}(\mu) : \mu \in
     {\mathcal{M}}_{\mathbb{P}}^{1}(\mathcal E,T) \}, &otherwise.
     \end{array}
     \right.
\]
\end{theorem}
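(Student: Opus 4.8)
The plan is to establish the variational principle in the standard two-halves fashion, adapting Kifer's random-dynamics arguments and the sub-additive deterministic arguments of \cite{caofh} to the present bundle setting. The first half (the easy direction) is to show that for every $\mu\in{\mathcal M}_{\mathbb P}^1(\mathcal E,T)$ with ${\mathcal F}_*(\mu)>-\infty$ one has $h_\mu^{(r)}(T)+{\mathcal F}_*(\mu)\le\pi_T(\mathcal F)$. For this I would fix a finite partition $\mathcal Q$ of $\mathcal E$ of the form $Q_i=(\Omega\times A_i)\cap\mathcal E$ with small diameter fibers, and for each $\omega$ and $n$ pick one point in each nonempty atom of $\bigvee_{i=0}^{n-1}(T_\omega^i)^{-1}{\mathcal Q}(\vartheta^i\omega)$; because the partition has small fibers this point set is essentially $(\omega,\epsilon,n)$-separated (up to refining $\mathcal Q$ relative to $\epsilon$). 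A Jensen/Shannon-type estimate then bounds $H_{\mu_\omega}(\bigvee_{i=0}^{n-1}(T_\omega^i)^{-1}{\mathcal Q}(\vartheta^i\omega))+\int f_n\,d\mu_\omega$ by $\log\pi_T({\mathcal F})(\omega,\epsilon,n)$ plus an error controlled by the oscillation of $f_n$ on atoms; integrating in $\omega$, dividing by $n$, letting $n\to\infty$ and then $\epsilon\downarrow0$ gives the bound. Here I would use ${\mathcal F}_*(\mu)=\inf_n\frac1n\int f_n\,d\mu$ (Lemma \ref{converge}) to pass to the limit cleanly, and Lemma \ref{bigsll} if a passage to a power $T^k$ is needed to absorb the sub-additive defect.

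The second half (the hard direction) is to show $\pi_T(\mathcal F)\le\sup\{h_\mu^{(r)}(T)+{\mathcal F}_*(\mu)\}$ (and in particular that if the sup is over an empty-in-spirit set, i.e. ${\mathcal F}_*\equiv-\infty$, then $\pi_T(\mathcal F)=-\infty$). The strategy is the classical one: for each $n$ and each small random $\epsilon$, using Lemma \ref{reason} choose a measurably varying family $G_\omega$ of maximal $(\omega,\epsilon,n)$-separated sets that nearly achieves $\pi_T({\mathcal F})(\omega,\epsilon,n)$, and form the fibered atomic measures $\sigma_\omega^{(n)}=\bigl(\sum_{x\in G_\omega}e^{f_n(\omega,x)}\bigr)^{-1}\sum_{x\in G_\omega}e^{f_n(\omega,x)}\delta_x$, assembled into $m^{(n)}\in{\mathcal P}_{\mathbb P}(\mathcal E)$ via $dm^{(n)}=d\sigma_\omega^{(n)}\,d\mathbb P(\omega)$. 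Then set $\mu^{(n)}=\frac1n\sum_{i=0}^{n-1}\Theta^i m^{(n)}$ and pass to a weak* limit $\mu$ along a subsequence $n_i$; by Lemma \ref{ceducpt}(i)--(ii) (or Lemma \ref{converge}) $\mu\in{\mathcal M}_{\mathbb P}^1(\mathcal E,T)$. The core computation estimates $\frac1n\int\log\pi_T({\mathcal F})(\omega,\epsilon,n)\,d\mathbb P$ from above by $\frac1n H_{m^{(n)}}(\bigvee_{i=0}^{n-1}(\Theta^i)^{-1}{\mathcal Q}\mid{\mathcal W}_\mathcal E)+\frac1n\int f_n\,dm^{(n)}$ for a suitable finite partition $\mathcal Q$ with $\mu$-null fiberwise boundary (guaranteed by a standard choice of partition with small diameter and using regularity of $\mu_\omega$); the entropy term is handled by the subadditivity of conditional entropy exactly as in the additive random case, breaking $\{0,\dots,n-1\}$ into blocks of length $q$ and invoking Lemma \ref{ceducpt}(iii) to get $\limsup_i\frac1{n_i}H_{m^{(n_i)}}(\cdots\mid{\mathcal W}_\mathcal E)\le\frac1q H_\mu(\bigvee_{i=0}^{q-1}(\Theta^i)^{-1}{\mathcal Q}\mid{\mathcal W}_\mathcal E)$, hence $\le h_\mu^{(r)}(T)$ after $q\to\infty$; the potential term is handled by Lemma \ref{converge}, giving $\limsup_i\frac1{n_i}\int f_{n_i}\,dm^{(n_i)}\le{\mathcal F}_*(\mu)$. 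Combining and letting $\epsilon\downarrow0$ yields $\pi_T(\mathcal F)\le h_\mu^{(r)}(T)+{\mathcal F}_*(\mu)\le\sup(\cdots)$.

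The main obstacle, as usual in the sub-additive setting, is that $f_n$ is only sub-additive, not additive, so the neat identity $f_n=\sum_{i=0}^{n-1}f_1\circ\Theta^i$ is unavailable; this is precisely why Lemma \ref{bigsll2}/\ref{converge} are needed — they replace the Birkhoff-sum manipulation by the inequality $\int kf_n\,d\mu\le 4k^2C+\int\sum_{i=0}^{n-1}f_k\circ\Theta^i\,d\mu$ and let one pass from $\frac1n\int f_n\,dm^{(n)}$ to $\int\frac1k f_k\,d\mu^{(n)}$, then take $k\to\infty$ using the sub-additive ergodic theorem. A secondary technical point is measurability throughout: one must keep all the fiberwise objects ($G_\omega$, $\sigma_\omega^{(n)}$, the partition atoms) measurable in $\omega$, which is exactly the content of Lemma \ref{reason} and the remarks on ${\mathcal W}_\mathcal E$; and one must be careful that the random radius $\epsilon(\omega)$ enters the separated-set counting and the partition diameter consistently (cf. Remark on random vs.\ nonrandom $\epsilon$). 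A further delicate step is arranging a single finite partition $\mathcal Q$ whose fiberwise boundary is $\mu$-null \emph{and} whose atoms have $d$-diameter small compared to the relevant $\epsilon$ uniformly enough to compare separated sets with partition atoms; this is done by the usual trick of taking atoms to be small balls with radii chosen off a null set, and it is where the compactness of $X$ and of the fibers $\mathcal E_\omega$ is used.
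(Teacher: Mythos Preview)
Your overall two-halves plan matches the paper, and your outline of the second (measure-constructing) half is essentially the paper's Step~2 verbatim: Lemma~\ref{reason} for measurable separated sets, weighted atomic fiber measures, Ces\`aro averaging, weak* limits via Lemma~\ref{ceducpt}(i)--(ii), block subadditivity for the conditional entropy together with Lemma~\ref{ceducpt}(iii), and Lemma~\ref{converge} for the sub-additive potential term. That half is fine.

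The gap is in your first half. Your claim that, for a partition $\mathcal Q$ with small-diameter fibers, picking one representative per atom of $\bigvee_{i=0}^{n-1}(T_\omega^i)^{-1}\mathcal Q(\vartheta^i\omega)$ yields an ``essentially $(\omega,\epsilon,n)$-separated'' set is false: small atom diameter says nothing about how close representatives of \emph{adjacent} atoms can be (small diameter is the hypothesis needed in Step~2, where it forces at most one separated point per atom --- here you need the opposite). Your proposed ``error controlled by the oscillation of $f_n$ on atoms'' is likewise problematic in the sub-additive setting, where there is no Birkhoff-sum decomposition to bound it. The paper's Step~1 instead uses the Misiurewicz compact-subset device: starting from an \emph{arbitrary} finite partition $\mathcal A=\{A_1,\dots,A_k\}$ of $X$, choose compact $B_i\subset A_i$ of nearly full $\mu$-measure so that $b=\min_{i\neq j}d(B_i,B_j)>0$, set $B_0(\omega)=\mathcal E_\omega\setminus\bigcup B_i(\omega)$, and take $\delta<b/2$. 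On each atom $C$ of $\bigvee_{j=0}^{n-1}(T_\omega^j)^{-1}\mathcal B(\vartheta^j\omega)$ pick the \emph{maximizer} $x(C)$ of $f_n(\omega,\cdot)$ (so no oscillation term arises). A combinatorial count shows each $x(C)$ is $(\omega,\delta,n)$-close to the representatives of at most $2^n$ other atoms; greedy extraction then produces a genuinely $(\omega,\delta,n)$-separated set $G$ with $2^n\sum_{y\in G}e^{f_n(\omega,y)}\ge\sum_C e^{f_n(\omega,x(C))}$, whence after integrating, dividing by $n$, and letting $n\to\infty$ one gets $h_\mu^{(r)}(T)+\mathcal F_*(\mu)\le\log 2+\pi_T(\mathcal F)$. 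The spurious $\log 2$ is then removed by applying the same estimate to $T^n$, $\mathcal F^{(n)}$ and invoking Lemma~\ref{bigsll} --- this, rather than any oscillation control, is where that lemma actually enters.
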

\begin{proof} For clarity, we divide the proof into three small steps:\\
Step 1: $\pi_{T}({\mathcal{F}}) \geq h_{\mu}^{(r)}(T) +
{\mathcal{F}}_{*}(\mu)$ , $\forall  \mu\in
{\mathcal{M}}_{\mathbb{P}}^{1}(\mathcal E,T)\ with\
{\mathcal{F}}_{*}(\mu)\neq -\infty$.

 Let  $\mu \in
{\mathcal{M}}_{\mathbb{P}}^{1}(\mathcal E,T)$ satisfying
${\mathcal{F}}_{*}(\mu)\neq -\infty$ and ${\mathcal{A}}=\{
A_1,\dots,A_k \}$ be a finite  partition of $X$. Let $\alpha>0$ be
given. Choose $\epsilon >0$ so that $\epsilon k \log k <\alpha$.
Denote by ${\mathcal{A}}(\omega)=\{ A_1(\omega),\dots,A_k(\omega)
\},\,\, A_i(\omega)=A_i\cap \mathcal E_\omega, i=1,\dots,k$, the
corresponding partition of $\mathcal E_\omega$. By the regularity
of $\mu$, we can find compact sets $B_i\subset A_i$, $1\leq i\leq
k$, such that
\[ \mu (A_i\setminus B_i) =\int \mu_{\omega}(A_i(\omega)\setminus
B_i(\omega)) {\mathrm{d}}\mathbb{P}(\omega) < \epsilon,
\]
where $B_i(\omega)=B_i \cap  \mathcal E_\omega$. Then let
$B_0(\omega) =\mathcal E_\omega \setminus \bigcup_{i=1}^{k}
B_i(\omega)$.  It follows that $$\int \mu_{\omega}(B_0(\omega))
{\mathrm{d}}{\mathbb{P}}(\omega)<k\epsilon.$$  Therefore (see
\cite{ki} p.79) the partition ${\mathcal{B}}(\omega)=\{
B_0(\omega),\dots,B_k(\omega) \}$ satisfies the inequality
\[ H_{\mu_{\omega}} ({\mathcal{A}}(\omega)\mid
{\mathcal{B}}(\omega)) \leq \mu_{\omega}(B_0(\omega)) \log k.
\]
Hence
\[
\int H_{\mu_{\omega}} ({\mathcal{A}}(\omega)\mid
{\mathcal{B}}(\omega)) {\mathrm{d}}{\mathbb{P}} \leq \int
\mu_{\omega}(B_0(\omega)) \log k {\mathrm{d}}{\mathbb{P}} \leq
k\epsilon \log k < \alpha.
\]

Take any $\omega$ such that $\mathcal E_\omega$ makes sense, and set
$b=\min\limits_{1\leq i\neq j\leq k} d(B_i,B_j)>0$. Pick $\delta
>0$ so that $\delta < b/2$. Let $n\in \mathbf{N}$. For each $C\in
{\mathcal{B}}_{n}(\omega):=\bigvee_{j=0}^{n-1} (T_{w}^{j})^{-1}
{\mathcal{B}}(\vartheta^{j}(\omega))$, choose some $x(C)\in$
Closure(C) such that $f_n(\omega, x(C))=\sup \{ f_n(\omega,x):
x\in C \}$, and we claim that for each $C\in
{\mathcal{B}}_n(\omega)$, there are at most $2^n$ many different
$\tilde{C}$'s in ${\mathcal{B}}_n(\omega)$ such that
\[ d_{\delta,n}^{\omega}(x(C),x(\tilde{C})):=\max\limits_{0\leq j\leq n-1}
 d(T_{\omega}^{j}x(C),T_{\omega}^{j}x(\tilde{C}))\delta^{-1} \leq 1 .\]
To see this claim, for each $C\in {\mathcal{B}}_n(\omega)$ we pick
up the unique index $(i_0(C),i_1(C),...,i_{n-1}(C))$ $\in \{
0,1,...,k \}^{n}$ such that
\[ C=B_{i_{0}(C)}(\omega)\cap (T_{\omega}^{1})^{-1}B_{i_{1}(C)}
(\vartheta\omega)\cap (T_{\omega}^{2})^{-1}B_{i_{2}(C)}(\vartheta^{2}\omega)
\cap ...\cap
(T_{\omega}^{n-1})^{-1}B_{i_{n-1}(C)}(\vartheta^{n-1}\omega).
\]
Now fix a $C\in {\mathcal{B}}_n(\omega)$ and let $\mathcal{Y}$
denote the collection of all $\tilde{C}\in
{\mathcal{B}}_n(\omega)$ with
$$d_{\delta,n}^{\omega}(x(C),x(\tilde{C})) \,\, \leq 1.$$  Then we have
\begin{eqnarray}\label{number} \#\{ i_l(\tilde{C}): \tilde{C}\in {\mathcal{Y}}
\}\leq 2,\quad l=0,1,...,n-1.
\end{eqnarray}
To see this inequality, we assume on the contrary that there are
three elements $\tilde{C_1}$,$\tilde{C_2}$,$\tilde{C_3}\in
\mathcal{Y}$ corresponding to the distinct values
$i_l(\tilde{C_1})$,$i_l(\tilde{C_2})$,$i_l(\tilde{C_3})$ for some
$0\leq l\leq n-1$ respectively. Then without  loss of generality,
we may assume $i_l(\tilde{C_1})\neq 0$ and $i_l(\tilde{C_2})\neq
0$. This implies
\begin{eqnarray*}
 d_{\delta,n}^{\omega}(x(\tilde{C_1}),x(\tilde{C_2})) &\geq&
 d(T_\omega^{l}x(\tilde{C_1}),T_\omega^{l}x(\tilde{C_2}))\delta^{-1}\geq
 d(B_{i_{l}(\tilde{C_{1}})}(\vartheta^{l}\omega),B_{i_{l}(\tilde{C_{2}})}(\vartheta^{l}\omega))\delta^{-1}\\
 &\geq& d(B_{i_{l}(\tilde{C_{1}})},B_{i_{l}(\tilde{C_{2}})})\delta^{-1}\geq b\delta^{-1}
 > 2\\ &\geq&
 d_{\delta,n}^{\omega}(x(\tilde{C_1}),x(\tilde{C}))+d_{\delta,n}^{\omega}(x(\tilde{C}),x(\tilde{C_2})).
 \end{eqnarray*}
 Which leads to a contradiction, thus (\ref{number}) is true, from which the claim follows. The third
 inequality follows from the fact that
$B_{i_{l}(\tilde{C_{j}})}(\vartheta^{l}\omega)=B_{i_{l}(\tilde{C_{j}})}\cap
\mathcal E_{\vartheta^{l}\omega}\subseteq
B_{i_{l}(\tilde{C_{j}})}(j=1,2)$.

 In the following we will construct an $(\omega,\delta,n)-$separated 
 set $G$ of $\mathcal E_\omega$  for $T$ such that
\begin{eqnarray}\label{inequality}
2^n \sum\limits_{y\in G}e^{f_n(\omega,y)} \geq \sum\limits_{C\in
{\mathcal{B}}_n(\omega)} e^{f_n(\omega,x(C))}.
\end{eqnarray}

(I) Take an element $C_1\in {\mathcal{B}}_n(\omega)$ such that
$f_n(\omega,x(C_1))=\max_{C\in {\mathcal{B}}_n(\omega)}
f_n(\omega,x(C))$. Let ${\mathcal{Y}}_1$ denote the collection of
all $\widetilde{C} \in {\mathcal{B}}_n(\omega)$ with
$d_{\delta,n}^{\omega}(x(\widetilde{C}),x(C_1))\leq1$ . Then the
cardinality of ${\mathcal{Y}}_1$ does not exceed $2^n$.

(II) If the collection ${\mathcal{B}}_n(\omega)\setminus
{\mathcal{Y}}_1$ is not empty, we choose an element $C_2\in
{\mathcal{B}}_n(\omega)\setminus {\mathcal{Y}}_1$ such that
$f_n(\omega,x(C_2))=\max_{C\in {\mathcal{B}}_n(\omega)\setminus
{\mathcal{Y}}_1} f_n(\omega,x(C))$. Let ${\mathcal{Y}}_2$ denote
the collection of $\widetilde{C}\in
{\mathcal{B}}_n(\omega)\setminus {\mathcal{Y}}_1$  with
$d_{\delta,n}^{\omega}(x(\widetilde{C}),x(C_2))\leq1$. We continue
this process. More precisely in step $m$, we choose an element
$C_m\in {\mathcal{B}}_n(\omega)\setminus
\bigcup\limits_{j=1}^{m-1} {\mathcal{Y}}_j$ such that
\[ f_n(w,x(C_m))=\max\limits_{C\in {\mathcal{B}}_n(\omega)\setminus
\bigcup\limits_{j=1}^{m-1} {\mathcal{Y}}_j} f_n(\omega,x(C)).
\]
Let ${\mathcal{Y}}_m$ denote the set of all ${\widetilde{C}}\in
{\mathcal{B}}_n(\omega)\setminus \bigcup\limits_{j=1}^{m-1}
{\mathcal{Y}}_j$ with
$d_{\delta,n}^{\omega}(x(\widetilde{C}),x(C_m))\leq1$. Since the
partition ${\mathcal{B}}_n(\omega)$ is finite, the above process
will stop at some step $l$. Denote $G=\{ x(C_j) : 1\leq j\leq l
\}$. Then $G$ is a $(\omega,\delta,n)-$separated set and
\[ \sum\limits_{y\in G} e^{f_n(\omega,y)} = \sum\limits_{j=1}^{l}
e^{f_n(\omega,x(C_j))} \geq  \sum\limits_{j=1}^{l} 2^{-n}
\sum\limits_{C\in {\mathcal{Y}}_{j}} e^{f_n(\omega,x(C))} =
2^{-n} \sum\limits_{C\in {\mathcal{B}}_{n}(\omega)}
e^{f_n(\omega,x(C))},
\]
from which (\ref{inequality}) follows.

Let $\mu\in {\mathcal{M}}_{\mathbb{P}}^{1}(\mathcal E,T)$. Then
\begin{eqnarray*}
 &&H_{\mu_{\omega}}({\mathcal{B}}_{n}(\omega)) + \int_{\mathcal E_\omega}
 f_n(\omega,x) {\mathrm{d}} \mu_{\omega}(x) \\
 \leq &&\sum\limits_{C\in {\mathcal{B}}_{n}(\omega)}
 \mu_{\omega}(C) (f_n(\omega,x(C)) -\log \mu_{\omega}(C))\\
\leq &&\log \sum\limits_{C\in {\mathcal{B}}_{n}(\omega)}
 e^{f_n(\omega,x(C))}\\
\leq &&\log 2^{n} \sum\limits_{y\in G} e^{f_n(\omega,y)}\\
 = && n\log 2 + \log  \sum\limits_{y\in G} e^{f_n(\omega,y)},
 \end{eqnarray*}
 the second inequality  follows from the standard inequality:
 $\Sigma p_i(a_i-\log p_i)\leq \log \Sigma e^{a_i}$ for any
 probability vector $(p_1,p_2,...,p_m)$, and the equality holds if
 and only if $p_i=e^{a_i}/\Sigma e^{a_j}$.
 Integrating against ${\mathbb{P}}$ on both sides of the above
 inequality, and dividing by $n$, we have
 \begin{eqnarray*}
 \frac{1}{n} \int H_{\mu_{\omega}}({\mathcal{B}}_{n}(\omega))
 {\mathrm{d}} {\mathbb{P}}(\omega) + \frac{1}{n} \int f_n(\omega,x)
 {\mathrm{d}} \mu(\omega,x) \leq \log 2 + \frac{1}{n} \int \log  \sum\limits_{y\in G}
 e^{f_n(\omega,y)} {\mathrm{d}} {\mathbb{P}}(\omega).
 \end{eqnarray*}
Letting $n\rightarrow \infty$, we obtain
\[ h_{\mu}^{(r)} (T,\Omega\times {\mathcal{B}}) +
{\mathcal{F}}_{*}(\mu) \leq \log 2 +
\pi_{T}({\mathcal{F}})(\delta).\]
 Using corollary 3.2 in \cite{bog}, we have
 \begin{eqnarray*}
  h_{\mu}^{(r)} (T,\Omega\times {\mathcal{A}}) +
  {\mathcal{F}}_{*}(\mu)&\leq& h_{\mu}^{(r)} (T,\Omega\times
  {\mathcal{B}}) + \int H_{\mu_{\omega}} ({\mathcal{A}}(\omega)\mid
{\mathcal{B}}(\omega)) {\mathrm{d}} {\mathbb{P}}
+{\mathcal{F}}_{*}(\mu)\\
 &\leq& \log 2 + \alpha +
\pi_{T}({\mathcal{F}})(\delta).
\end{eqnarray*}
Since this is true for all ${\mathcal{A}}$ , $\alpha$  and
$\delta$, we know
\[ h_{\mu}^{(r)} (T) + {\mathcal{F}}_{*}(\mu) \leq \log 2 +
\pi_{T}({\mathcal{F}}). \]
 Applying the above argument to $T^n$ and ${\mathcal{F}}^{(n)}$,
  since $h_{\mu}^{(r)}(T^n) =n h_{\mu}^{(r)}(T)$(see theorem3.6 in
 \cite{bog}), and using lemma 3.3, we obtain

 \begin{eqnarray*}
 n(h_{\mu}^{(r)} (T) + {\mathcal{F}}_{*}(\mu)) &\leq& \log 2 +
 \pi_{T^{n}}({\mathcal{F}}^{(n)})\\
 &\leq& \log 2 + n \pi_{T}({\mathcal{F}}).
 \end{eqnarray*}
 Since $n$ is arbitrary, we have $h_{\mu}^{(r)} (T) + {\mathcal{F}}_{*}(\mu) \leq
 \pi_{T}({\mathcal{F}})$.

 Step 2: If $\pi_{T}({\mathcal{F}})\neq -\infty$, then for any small
 enough $\epsilon >0$, there exists a $\mu\in
 {\mathcal{M}}_{\mathbb{P}}^{1}(\mathcal E,T)$ such that ${\mathcal{F}}_{*}(\mu)\neq
 -\infty$ and $h_{\mu}^{(r)} (T) + {\mathcal{F}}_{*}(\mu) \geq
 \pi_{T}({\mathcal{F}})(\epsilon)$.

 Let $\epsilon >0$ be an arbitrary small number such that $\pi_{T}({\mathcal{F}})(\epsilon)\neq
 -\infty$. For any $n\in {\mathbf{N}}$, due to lemma 3.1, we can
 take a measurable in $\omega$ family of maximal
 $(\omega,\epsilon,n)$ separated sets $G(\omega,\epsilon,n) \subset
 \mathcal E_\omega$ such that
 \begin{eqnarray}\label{inequality2} \sum\limits_{x\in G(\omega,\epsilon,n)} e^{f_n(\omega,x)}
 \geq \frac{1}{e} \pi_{T}({\mathcal{F}})(\omega,\epsilon,n).
 \end{eqnarray}
 Next, define probability measures $\upsilon^{(n)}$ on $\mathcal E$ via
 their measurable disintegrations
 \[ \upsilon_{\omega}^{(n)} = \frac {\sum_{x\in
 G(\omega,\epsilon,n)}e^{f_n(\omega,x)}\delta_x} {\sum_{y\in
 G(\omega,\epsilon,n)}e^{f_n(\omega,y)}}
 \]
where $\delta_{x}$ denotes the Dirac measure at $x$, so that
${\mathrm{d}}\upsilon^{(n)} (\omega,x)= {\mathrm{d}}
\upsilon_{\omega}^{(n)} (x) {\mathrm{d}} {\mathbb{P}}(\omega)$,
and set
\[ \mu^{(n)} = \frac {1}{n} \sum\limits_{i=0}^{n-1} \Theta^{i}
\upsilon^{(n)}.
\]
By the definition of $\pi_{T}({\mathcal{F}})(\epsilon)$ and lemma
3.2 (i)-(ii), we can choose a subsequence of positive integers $\{
n_j \}$ such that
\begin{eqnarray}\label{equality}
\lim\limits_{j\rightarrow \infty} \frac{1}{n_j} \int \log
\pi_{T}({\mathcal{F}})(\omega,\epsilon,n_j) {\mathrm{d}}
{\mathbb{P}}(\omega)= \pi_{T}({\mathcal{F}})(\epsilon) \qquad and\
\mu^{(n_j)}\Rightarrow \mu\ as\ j\rightarrow\infty
\end{eqnarray}
for some $\mu\in {\mathcal{M}}_{\mathbb{P}}^{1}(\mathcal E,T)$.

Now we choose a partition ${\mathcal{A}}=\{ A_1,\dots,A_k \}$ of
$X$ with ${\mathrm{diam}}({\mathcal{A}}):=\max \{
{\mathrm{diam}}(A_j):1\leq j \leq k \} \leq \epsilon$ and such
that $\int \mu_{\omega}(\partial A_i)
{\mathrm{d}}{\mathbb{P}}(\omega) =0$ for all $1\leq i\leq k$,
where $\partial$ denotes the boundary. Set ${\mathcal{A}}(\omega)
=\{ A_1(\omega),\dots, A_k(\omega)\}, A_i(\omega) =A_i\cap
\mathcal E_\omega, 1\leq i\leq k$. Since each element of
$\bigvee\limits_{i=0}^{n-1} (T_{\omega}^{i})^{-1}
{\mathcal{A}}(\vartheta^{i}\omega)$ contains at most one element
of $G(\omega,\epsilon,n)$, we have by (\ref{inequality2}),
\begin{eqnarray*}
&&H_{\upsilon_{\omega}^{(n)}} (\bigvee\limits_{i=0}^{n-1}
(T_{\omega}^{i})^{-1} {\mathcal{A}}(\vartheta^{i}\omega)) + \int
f_n(\omega,x) {\mathrm{d}} \upsilon_{\omega}^{(n)}(x)\\
&&=\sum\limits_{y\in G(\omega,\epsilon,n)}
\upsilon_{\omega}^{(n)}(\{y\})(f_n(\omega,y)-\log
\upsilon_{\omega}^{(n)}(\{ y\}))\\
&&=\log \sum\limits_{y\in G(\omega,\epsilon,n)} e^{f_n(\omega,y)}\\
&&\geq \log \pi_{T}({\mathcal{F}})(\omega,\epsilon,n) -1.
\end{eqnarray*}
Let ${\mathcal{B}}=\{ B_1,\dots,B_k \}$, $B_i=(\Omega\times
A_i)\cap \mathcal E$. Then ${\mathcal{B}}$ is a partition of
$\mathcal E$ and $B_i(\omega)=\{ x\in \mathcal E_\omega :
(\omega,x)\in B_i \}= A_i(\omega)$. Integrating in the above
inequality against ${\mathbb{P}}$ and dividing by $n$, we have by
(2.2) the inequality
 \begin{eqnarray}\label{large}
 \frac{1}{n}H_{\upsilon^{(n)}} (\bigvee\limits_{i=0}^{n-1} (\Theta^{i})^{-1} {\mathcal{B}}\mid
 {\mathcal{W}}_{\mathcal E})+ \frac{1}{n}\int f_n {\mathrm{d}} \upsilon^{(n)} \geq
 \frac{1}{n}\int \log \pi_{T}({\mathcal{F}})(\omega,\epsilon,n)
 {\mathrm{d}}{\mathbb{P}}(\omega) -\frac{1}{n}.
 \end{eqnarray}
 Consider $q,n\in {\mathbf{N}}$ such that $1< q<n$ and for $0\leq l<q$ and  let
 $a(l)$ denote the integer part of $(n-l)q^{-1}$, so that
 $n=l+a(l)q+r$ with $0\leq r<q$. Then
 \[ \bigvee\limits_{i=0}^{n-1} (\Theta^{i})^{-1} {\mathcal{B}} =
 (\bigvee\limits_{j=0}^{a(l)-1}(\Theta^{l+jq})^{-1} \bigvee\limits_{i=0}^{q-1} (\Theta^{i})^{-1} {\mathcal{B}})
 \vee \bigvee\limits_{m\in S_{l}} (\Theta^{m})^{-1} {\mathcal{B}},
 \]
 where $S_{l}$ is a subset of $\{ 0,1,...,n-1\}$ with cardinality at most $2q$.
  Since card${\mathcal{B}}=k$, taking into account the
 subadditivity of conditional entropy(see \cite{ki}, section 2.1) it follows that
 \[ H_{\upsilon^{(n)}} (\bigvee\limits_{i=0}^{n-1} (\Theta^{i})^{-1} {\mathcal{B}}\mid
 {\mathcal{W}}_{\mathcal E})\leq \sum\limits_{j=0}^{a(l)-1}
 H_{\Theta^{l+jq}{\upsilon^{(n)}}} (\bigvee\limits_{i=0}^{q-1} (\Theta^{i})^{-1} {\mathcal{B}}\mid
 {\mathcal{W}}_{\mathcal E}) + 2q\log k.
 \]
 Summing here over $l\in \{ 0,1,\dots,q-1 \}$, we have
 \begin{eqnarray*} qH_{\upsilon^{(n)}} (\bigvee\limits_{i=0}^{n-1} (\Theta^{i})^{-1} {\mathcal{B}}\mid
 {\mathcal{W}}_{\mathcal E}) &\leq& \sum\limits_{m=0}^{n-1}
 H_{\Theta^{m}\upsilon^{(n)}} (\bigvee\limits_{i=0}^{q-1} (\Theta^{i})^{-1} {\mathcal{B}}\mid
 {\mathcal{W}}_{\mathcal E}) + 2q^{2} \log k\\
 &\leq& nH_{\mu^{(n)}}(\bigvee\limits_{i=0}^{q-1} (\Theta^{i})^{-1} {\mathcal{B}}\mid
 {\mathcal{W}}_{\mathcal E}) + 2q^{2} \log k
 \end{eqnarray*}
 where the second inequality relies on the general property of the
 conditional entropy of partitions $H_{\sum_{i}p_i \eta_i}(\xi\mid
 {\mathcal{R}})\geq$ $
 \sum_{i}p_i H_{\eta_i}(\xi\mid {\mathcal{R}})$ which holds for
 any finite partition $\xi$, $\sigma-$algebra ${\mathcal{R}}$,
 probability measures $\eta_i$, and probability vector
 $(p_i),i=1,\dots,n,$ in view of the convexity of $t\log t$ in the
 same way as in the unconditional case(see \cite{wal}, pp.
 183 and 188). Dividing by $nq$ in inequality as above, we have
 \[ \frac{1}{n}H_{\upsilon^{(n)}} (\bigvee\limits_{i=0}^{n-1} (\Theta^{i})^{-1} {\mathcal{B}}\mid
 {\mathcal{W}}_{\mathcal E})\leq \frac{1}{q}H_{\mu^{(n)}}(\bigvee\limits_{i=0}^{q-1} (\Theta^{i})^{-1} {\mathcal{B}}\mid
 {\mathcal{W}}_{\mathcal E}) + \frac{2q\log k}{n}.
 \]
 In particularly, we have
 \begin{eqnarray}\label{inequality3}
 \frac{1}{n_i}H_{\upsilon^{(n_i)}} (\bigvee\limits_{j=0}^{n_i-1} (\Theta^{j})^{-1} {\mathcal{B}}\mid
 {\mathcal{W}}_{\mathcal E})\leq \frac{1}{q}H_{\mu^{(n_i)}}(\bigvee\limits_{j=0}^{q-1} (\Theta^{j})^{-1} {\mathcal{B}}\mid
 {\mathcal{W}}_{\mathcal E}) + \frac{2q\log k}{n_i}.
 \end{eqnarray}

 Observe that the boundary of
 $\bigvee\limits_{i=0}^{q-1}(T_{\omega}^{i})^{-1}{\mathcal{A}}(\vartheta^{i}\omega)$
 is contained in the union of boundaries of
 $(T_{\omega}^{i})^{-1}{\mathcal{A}}(\vartheta^{i}\omega)$ and $\mu_{\omega}((T_{\omega}^{i})^{-1}\partial
 {\mathcal{A}}(\vartheta^{i}\omega))= \mu_{\vartheta^{i}\omega}(\partial {\mathcal{A}}(\vartheta^{i}\omega))
  {\mathbb{P}}-a.s.$. $\mu\in
 {\mathcal{M}}_{\mathbb{P}}^{1}(\mathcal E,T)$ implies that   $\mu_{\omega}(\partial
 \bigvee\limits_{i=0}^{q-1}(T_{\omega}^{i})^{-1}{\mathcal{A}}(\vartheta^{i}\omega))=0 \quad
 {\mathbb{P}}-a.s.$  Taking into account lemma 3.2(iii), we have
 \begin{eqnarray*} \limsup\limits_{i\rightarrow \infty} \frac{1}{q}
 H_{\mu^{(n_i)}}(\bigvee\limits_{j=0}^{q-1}(\Theta^{j})^{-1}{\mathcal{B}}\mid {\mathcal{W}}_{\mathcal E})
 \leq \frac{1}{q}
 H_{\mu}(\bigvee\limits_{j=0}^{q-1}(\Theta^{j})^{-1}{\mathcal{B}}\mid {\mathcal{W}}_{\mathcal
 E}).
 \end{eqnarray*}
Letting $i$ approach $\infty$ in (\ref{inequality3}), we have
\begin{eqnarray}\label{inequality4}
\limsup\limits_{i\rightarrow \infty}
\frac{1}{n_i}H_{\upsilon^{(n_i)}} (\bigvee\limits_{j=0}^{n_i-1}
(\Theta^{j})^{-1} {\mathcal{B}}\mid
 {\mathcal{W}}_{\mathcal E})\leq \frac{1}{q}H_{\mu}(\bigvee\limits_{j=0}^{q-1} (\Theta^{j})^{-1} {\mathcal{B}}\mid
 {\mathcal{W}}_{\mathcal E}).
 \end{eqnarray}
 From lemma 3.5, we know
 \begin{eqnarray}\label{inequality5}
 \limsup\limits_{i\rightarrow \infty} \frac{1}{n_i} \int f_{n_i}
 {\mathrm{d}} \upsilon^{(n_i)} \leq {\mathcal{F}}_{*}(\mu).
 \end{eqnarray}
Combining (\ref{equality}),(\ref{large}),(\ref{inequality4}) with
(\ref{inequality5}), we obtain
\[ \frac{1}{q}H_{\mu}(\bigvee\limits_{j=0}^{q-1} (\Theta^{j})^{-1} {\mathcal{B}}\mid
 {\mathcal{W}}_{\mathcal E}) +{\mathcal{F}}_{*}(\mu) \geq
 \pi_{T}({\mathcal{F}})(\epsilon).
 \]
 Letting $q\rightarrow \infty$, we have
 \[ \pi_{T}({\mathcal{F}})(\epsilon) \leq
 h_{\mu}^{(r)}(T,{\mathcal{B}}) + {\mathcal{F}}_{*}(\mu) \leq
 h_{\mu}^{(r)}(T) + {\mathcal{F}}_{*}(\mu).
 \]
This completes the proof of step 2.

Step 3: $\pi_{T}({\mathcal{F}})=-\infty$ if and only if
${\mathcal{F}}_{*}(\mu)=-\infty$ for all $\mu\in
{\mathcal{M}}_{\mathbb{P}}^{1}(\mathcal E,T)$.

By step 1 we have $\pi_{T}({\mathcal{F}}) \geq h_{\mu}^{(r)}(T) +
{\mathcal{F}}_{*}(\mu)$ for all $\mu\in
{\mathcal{M}}_{\mathbb{P}}^{1}(\mathcal E,T)$ with
${\mathcal{F}}_{*}(\mu)\neq -\infty$, which shows the necessity.
The sufficiency is implied by step 2(since if
$\pi_{T}({\mathcal{F}})\neq -\infty$, then by step 2 there exists
some $\mu$ with ${\mathcal{F}}_{*}(\mu)\neq -\infty$). This
completes   the proof of the theorem.
\end{proof}

\section{The Hausdorff dimension for asymptotic conformal repellers}

In this section, we consider the Hausdorff dimension for repeller in  random dynamical system(RDS). Precisely,
fix an ergodic invertible transformation $\vartheta$ of a probability space $(\Omega,
\mathcal{W},\mathbb{P})$ and let $M$ be  a compact Riemann manifold. We consider a measurable
 family $T = \{ T_{\omega}: M \to M \} $ of $C^1$ maps, i.e.
$(\omega, x) \mapsto T_{\omega} x $ is assumed to be measurable. This determines a differentiable RDS via
$T_{\omega}^{n}={T_{\vartheta^{n-1}\omega } \cdots
T_{\vartheta\omega} T_{\omega}},\ \ n\geq 1$.
 $ \mathcal E \subset
\Omega\times M$ is a measurable set and such that  all the fibers
( sometimes called $\omega$-sections ) $\mathcal E_{\omega}=\{
x\in X \mid (\omega,x)\in \mathcal E\}$ are compact. $\mathcal E$
is said to be invariant with $T$ if
 $T_{\omega} \mathcal E_{\omega} = \mathcal E_{\vartheta\omega} \,\,\ \  \mathbb{P}-a.s.$  In
 \cite{kifer-tran}, \cite{bog2}, the authors consider the Hausdorff dimension for repeller
 in $C^{1+\alpha}$ conformal random dynamical system. They prove that, if $T_{\omega}$ is $C^{1+\alpha}$ conformal for
 $ \mathbb{P}-a.s$  and  $ \mathcal E \subset \Omega\times M$ is a repeller which is  invariant with $T$  for
 random dynamical system,
 then the Hausdorff dimension
can be obtained as the zero $t_0$ of $t \mapsto \pi_T (-t \log\|D_x T \|)$, where $\pi_T (-t \log\|D_x T \|)$ is
topological pressure for random dynamical system $T$ with additive potential $-t \log\|D_x T \|$.

A  repeller is called conformal if  $T_{\omega}$ for
 $ \mathbb{P}-a.s$ is conformal. In some sense, conformality   in  random dynamical systems is strong.
  Now we give a  definition of  asymptotically  conformal repeller, which is weaker than conformal repeller.
 Let $\mathcal M_{\mathbb{P}}(\mathcal E)$ be the set of $T$ invariant probability measures on $\mathcal E$
 whose marginal on $\Omega$ coincides with $\mathbb{P}$ and $E_{\mathbb{P}}(\mathcal E)$ be the set of $T$ invariant
 ergodic  probability measures on $\mathcal E$
 whose marginal on $\Omega$ coincides with $\mathbb{P}$. By the Oseledec multiplicative ergodic theorem \cite{ose}, for any
 $\mu \in E_{\mathbb{P}}(\mathcal E)$, we can define Lyapunov exponents $\lambda_1(\mu) \le \lambda_2(\mu) \le
  \cdots \le \lambda_d(\mu), \, \, \,  d=dim M$.  An invariant  repeller  for
 random dynamical system is called asymptotically conformal if for any $\mu \in E_{\mathbb{P}}(\mathcal E)$, $\lambda_1(\mu)=
 \lambda_2(\mu) = \cdots = \lambda_d(\mu)$. It is obvious that a conformal repeller is an asymptotically conformal repeller, but
  reverse isn't true. Using topological pressure of random bundle transformations in sub-additive case, we can obtain
  the Hausdorff dimension for asymptotically conformal repeller. We state the result as follows, and the proof will be
  given in
  the forthcoming paper.

\begin{theorem} Let $T$ be $C^{1+\alpha}$ random dynamical system and $\mathcal E$ be an asymptotically conformal repeller.
Then the Hausdorff dimension of $\mathcal E$ is zero $t^*$ of $t
\mapsto \pi_T (-t \mathcal F)$, where $\mathcal F = \{ \log m(D_x
T_{\omega}^n), \ \ (\omega, x)  \in \mathcal E, n \in \mathbb{N}
\}$ and $m(A)=\|A^{-1}\|^{-1}$.
\end{theorem}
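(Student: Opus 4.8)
The plan is to follow the classical strategy of Bowen for dimension of conformal repellers, adapted to the random sub-additive setting already developed in this paper. The first step is to show that the function $t \mapsto \pi_T(-t\mathcal F)$ is continuous and strictly decreasing, so that the zero $t^*$ is well defined and unique. Here $\mathcal F = \{\log m(D_x T_\omega^n)\}$ is sub-additive by the chain rule together with the submultiplicativity $m(AB) \ge m(A)m(B)$, which gives $\log m(D_x T_\omega^{n+m}) \ge \log m(D_{T_\omega^n x} T_{\vartheta^n\omega}^m) + \log m(D_x T_\omega^n)$; applying $-t$ for $t>0$ reverses the inequality and yields a sub-additive family, so Theorem~\ref{mthm} applies. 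Monotonicity and continuity in $t$ follow from uniform bounds on $\frac1n\log m(D_x T_\omega^n)$ coming from the $C^1$ hypothesis and integrability of $\log\|D T_\omega\|$ and $\log m(D T_\omega)$ on fibers, exactly as in the deterministic case.

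Next I would prove the upper bound $\dim_H \mathcal E_\omega \le t^*$ for $\mathbb P$-a.e. $\omega$. The idea is to use $(\omega,\epsilon,n)$-separated (equivalently, spanning) sets to build efficient covers of $\mathcal E_\omega$: the Bowen balls $B_\omega(n,x,\epsilon)$ have diameter controlled, via the asymptotic conformality and the mean value theorem along the random orbit, by something comparable to $m(D_x T_\omega^n)^{-1}$ up to subexponential errors. Summing $(\operatorname{diam})^t$ over a spanning set of scale $\epsilon$ and using the definition of $\pi_T(-t\mathcal F)(\epsilon)$, one finds that for $t > t^*$ the $t$-dimensional Hausdorff sums stay bounded as $n\to\infty$, hence $\dim_H \mathcal E_\omega \le t$; letting $t \downarrow t^*$ gives the bound. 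The asymptotic conformality hypothesis $\lambda_1(\mu)=\cdots=\lambda_d(\mu)$ for all ergodic $\mu$ is what guarantees (by a compactness/uniformity argument over the compact set $\mathcal M_{\mathbb P}(\mathcal E)$, as in the reduction of pointwise Lyapunov behavior to uniform behavior) that the ratio $\log\|D_x T_\omega^n\| / \log m(D_x T_\omega^n) \to 1$ uniformly, so that Bowen balls are genuinely comparable to round balls and the distortion is subexponential.

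For the lower bound $\dim_H \mathcal E_\omega \ge t^*$ I would use the variational principle: by Theorem~\ref{mthm} there is, for each $\delta>0$, an invariant measure $\mu \in \mathcal M_{\mathbb P}^1(\mathcal E,T)$ with $h_\mu^{(r)}(T) + (-\mathcal F)_*(\mu)\cdot t^* $ close to $\pi_T(-t^*\mathcal F)=0$, i.e. with $h_\mu^{(r)}(T) \approx t^* \mathcal F_*(\mu)$ where now $\mathcal F_*(\mu) = \lim \frac1n\int \log m(D_x T_\omega^n)\,d\mu$ equals $\lambda_1(\mu)=\cdots=\lambda_d(\mu)$ (the common Lyapunov exponent, integrated). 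Disintegrating $\mu$ and applying a random/relativized version of the Volume Lemma (the Ledrappier–Young / Frostman-type estimate relating local dimension of $\mu_\omega$ to $h_\mu^{(r)}(T)/\mathcal F_*(\mu)$, valid because $T_\omega$ is $C^{1+\alpha}$ and asymptotically conformal so the unstable dimension is the full $d$ and distortion is controlled), one gets $\dim_H \mu_\omega \ge h_\mu^{(r)}(T)/\mathcal F_*(\mu) \approx t^*$ for $\mathbb P$-a.e. $\omega$, hence $\dim_H \mathcal E_\omega \ge t^* - O(\delta)$, and $\delta\to 0$ finishes it.

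The main obstacle is the lower bound, specifically establishing the random Volume Lemma in the merely asymptotically conformal case: without genuine conformality one must show that the distortion of $T_\omega^n$ on a Bowen ball is subexponential, which requires upgrading the fiberwise, $\mu$-a.e. equality of Lyapunov exponents to a uniform comparison of $\|D_x T_\omega^n\|$ and $m(D_x T_\omega^n)$ across \emph{all} of $\mathcal E_\omega$ and across a full-measure set of $\omega$ simultaneously. The natural route is a subadditive compactness argument: the two sub-additive families $\{\log\|D_x T_\omega^n\|\}$ and $\{-\log m(D_x T_\omega^n)\}$ have, by hypothesis, the same integral against every invariant measure, so by a standard argument (as used for the uniform sub-additive ergodic theorem) their difference is subexponentially small uniformly on $\mathcal E$; this is where the $C^{1+\alpha}$ regularity and the invariance of the repeller are essential, and it is the step most deserving of care in the forthcoming paper. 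The remaining ingredients — measurability of the separated/spanning sets in $\omega$ (Lemma~\ref{reason}), the entropy machinery, and the variational principle itself — are already in place from Sections 2–4.
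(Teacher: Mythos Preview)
The paper does not prove this theorem: immediately before the statement the authors write ``We state the result as follows, and the proof will be given in the forthcoming paper.'' There is therefore nothing in the present paper to compare your proposal against.

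That said, your outline follows the natural Bowen-type strategy and correctly identifies both the main ingredients (monotonicity of $t\mapsto\pi_T(-t\mathcal F)$, covers by Bowen balls for the upper bound, the variational principle plus a random Volume Lemma for the lower bound) and the genuine obstacle, namely upgrading the ergodic equality $\lambda_1(\mu)=\cdots=\lambda_d(\mu)$ to a uniform comparison of $\|D_xT_\omega^n\|$ and $m(D_xT_\omega^n)$ so that Bowen balls behave like round balls up to subexponential distortion. One small correction: your sub-additivity check yields that $-t\mathcal F$ is sub-additive only for $t\ge 0$; this is harmless since $\pi_T(0)$ equals the relativized topological entropy, which is nonnegative, and the function is decreasing, so the zero $t^*$ lies in $[0,\infty)$ --- but it should be stated. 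Beyond that, your plan is a plausible blueprint at roughly the same level of detail the authors themselves defer to their forthcoming work; the random Volume Lemma in the asymptotically conformal $C^{1+\alpha}$ setting is a substantial result that your sketch does not supply, but neither does this paper.
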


\begin{remark}
 If $\mathcal E$ isn't asymptotically conformal repeller, we can obtain the upper estimate of the Hausdorff
dimension by using topological pressure of random bundle transformations in sub-additive case.
\end{remark}

  \end{document}